
\documentclass{amsart}
\usepackage{a4wide, amsmath, amssymb, url}

\usepackage[colorlinks]{hyperref}
\hypersetup{linkcolor=blue, urlcolor=blue, citecolor=red}


\newtheorem{thm}{Theorem}[section]
\newtheorem{lem}[thm]{Lemma}
\newcommand{\proofref}[1]{\noindent {\emph{Proof of Theorem} \ref{#1}.\ }}

\newcommand{\nat}{\mathbb{N}}
\newcommand{\reals}{\mathbb{R}}
\newcommand{\set}[2]{\ensuremath{\{ #1  : #2 \}}}
\newcommand{\genset}[1]{\ensuremath{\langle #1 \rangle}}

\newcommand{\fin}{\mathfrak{F}}
\newcommand{\jas}{^{\smallfrown}}
\newcommand{\C}{\mathcal{C}}
\newcommand{\A}{\mathcal{A}}
\newcommand{\ntrans}{\nat^{\nat}}
\newcommand{\U}{\mathfrak{U}}
\renewcommand{\S}{\mathcal{S}}

\renewcommand{\to}{\longrightarrow}


\title[The Bergman-Shelah Preorder on Transformation Semigroups]{The Bergman-Shelah Preorder on Transformation Semigroups}

\author{Z. Mesyan, J. D. Mitchell, M. Morayne, and Y. P\'eresse}

\address{Z. Mesyan, Department of Mathematics, University of Colorado, 
Colorado Springs, CO 80918, USA}
\email{\href{mailto:zmesyan@uccs.edu}{zmesyan@uccs.edu}}
\address{J. D. Mitchell, Mathematical Institute, North Haugh, St Andrews, Fife, KY16 9SS, Scotland}
\email{\href{mailto:jdm3@st-and.ac.uk}{jdm3@st-and.ac.uk}}
\address{M. Morayne,  Institute of Mathematics  and Computer Science, Wroc\l aw University of Technology, Wybrze\.ze Wyspia\'nskiego 27, 50-370 Wroc\l aw, Poland}
\email{\href{mailto:michal.morayne@pwr.wroc.pl}{michal.morayne@pwr.wroc.pl}}
\address{Y. P\'eresse, Mathematical Institute, North Haugh, St Andrews, Fife, KY16 9SS, Scotland}
\email{\href{mailto:yhp1@st-and.ac.uk}{yhp1@st-and.ac.uk}}

\thanks{2010 Mathematics Subject Classification numbers:
20M20, 08A35.}
\keywords{full transformation semigroup, subsemigroups closed in the function topology, partial order on subsemigroups, continuum hypothesis}


\begin{document}

\begin{abstract}
Let $\nat^\nat$ be the semigroup of all mappings on  the natural numbers $\nat$, and let $U$ and $V$ be 
subsets of $\nat^\nat$. We write $U\preccurlyeq V$ if there exists a countable subset $C$ of $\nat^\nat$ such that 
$U$ is contained in the subsemigroup generated by $V$ and $C$. We give several results about  the structure of 
the preorder $\preccurlyeq$. In particular, we show that a certain
statement about this preorder is equivalent to the Continuum Hypothesis.

The preorder $\preccurlyeq$ is analogous to one introduced by Bergman and Shelah  on subgroups of the symmetric group on $\nat$. 
The results in this paper suggest that the preorder on subsemigroups of $\nat^\nat$ is much more complicated than that on subgroups of the symmetric group.
 
\end{abstract}

\maketitle


\section{introduction and background}

The semigroup of all mappings from $\nat=\{0,1,2,\ldots\}$ to itself is denoted by $\nat^\nat$.  Given subsets $U$ and $V$  of $\nat^\nat$, we write $U\preccurlyeq V$ if there exists a countable subset $C$ of $\nat^\nat$ such that $U$ is contained in the subsemigroup $\genset{V, C}$ generated by $V$ and $C$.  It follows from a classical result by Sierpi\'nski \cite{Banach1935aa} that if $U\preccurlyeq V$, then there exist $f,g\in \ntrans$ such that $U \subseteq \genset{V,f,g}$. So replacing the word `countable' above by `finite' or even `2-element' yields an equivalent definition of $\preccurlyeq$.
We write $U\approx V$ if  $U\preccurlyeq V$ and $V\preccurlyeq U$, and we write $U\prec V$ if $U\preccurlyeq V$ and  $U \not\approx V$.

The semigroup $\nat^\nat$ has a natural topology: the product topology arising from the discrete topology on $\nat$; see \cite[Section 9.B(7)]{Kechris1995aa} for further details.  Under this topology, composition of functions is continuous, making $\nat^\nat$  a \emph{topological semigroup}.  Let $S_{\infty}$ denote the symmetric group on $\nat$, i.e. the group of invertible elements of $\nat^{\nat}$. As it happens the function $x\mapsto x^{-1}$ on $S_{\infty}$ is also continuous, and so $S_{\infty}$ is a \emph{topological group} with the induced topology.
We refer to subgroups of $S_{\infty}$ and subsemigroups of $\nat^\nat$ that are closed in the relevant topologies as \emph{closed subgroups} and \emph{closed subsemigroups}, respectively.
It is a well-known fact that  the closed subsemigroups of $\nat^\nat$ are precisely the endomorphism semigroups of relational structures on $\nat$ and that  the closed subgroups of $S_{\infty}$ are the corresponding automorphisms groups; see, for example,  \cite[Theorem 5.8]{Cameron1999aa}.  

The preorder $\preccurlyeq$ is analogous to a preorder on the subsets of $S_{\infty}$ introduced in \cite{Bergman2006ac}: 
if $U,V \subseteq S_{\infty}$, then  $U$ is less than $V$ whenever $U$ is contained in the subgroup generated by $V\cup C$ for some countable $C\subseteq S_{\infty}$. Once again, insisting that $C$ is finite, or even of size $2$, yields an equivalent definition; see \cite[Lemma 3(i)]{Bergman2006ac}.
In \cite{Bergman2006ac} it is shown that the closed subgroups of $S_{\infty}$ fall into 
 four equivalence classes with respect to this preorder. Various classes of subsemigroups of $\nat^\nat$ are classified according to $\approx$ in \cite{Mesyan2007aa} and \cite{Mitchell2010ac}.  
The situation is much more complicated in $\nat^\nat$, as in particular, there are infinitely many distinct $\approx$-classes containing closed subsemigroups. 
For example,  define for each $n\geq 2$ 
 $$\fin_n=\set{f\in \nat^\nat}{|f(\nat)|\leq n}.$$ 
 It is straightforward to show that $\fin_n$ is a closed subsemigroup of $\nat^\nat$ for all $n\geq 2$. Furthermore each $\fin_n$ is an ideal of $\ntrans$ and so if $U\subseteq \genset{ \fin_n, C}$ for some $U,C\subseteq \ntrans$, then $U\setminus \fin_n \subseteq \genset{C}$. Hence $U \preccurlyeq \fin_n$ if and only if $U\setminus \fin_n$ is countable. But $|\fin_m \setminus \fin_n|=2^{\aleph_0}$ whenever $m>n$ and so
$\fin_2\prec \fin_3 \prec \cdots.$

We prove five results that exhibit the complicated structure of $\preccurlyeq$ and its sensitivity to set-theoretic assumptions. 

In Theorem \ref{CHequivalent}, we show that the Continuum Hypothesis holds if and only if   there exists a subsemigroup $S$ of $\nat^\nat$ such that $S\approx \nat^\nat$ and for all subsemigroups $T$ of $S$ either $T\approx \nat^\nat$ or $T$ is equivalent to the trivial semigroup $\{1_{\nat}\}$. 
We prove that for every closed subsemigroup $S$ of $\nat^\nat$ with cardinality $2^{\aleph_0}$ there is a closed subsemigroup $T$  of $\fin_2$ of cardinality $2^{\aleph_0}$ such that $T \preccurlyeq S$ (Theorem \ref{cantor_bendix}).  Theorem \ref{cantor_bendix} could be viewed as an analogue of the classical theorem that every perfect Polish topological space contains a copy of the Cantor set. 
To show that $T$ in Theorem \ref{cantor_bendix} cannot be replaced by $\fin_2$, we associate a semigroup to each almost disjoint family of subsets of $\nat$ with cardinality $2^{\aleph_0}$ and show that any such semigroup  is incomparable to $\fin_n$ for all $n\in \nat$ (Theorem \ref{lost_monoid}). We prove that there are anti-chains of $\approx$-classes containing closed subsemigroups of $\nat^\nat$ with arbitrary finite length  (Theorem \ref{finite_preorder}). 
Finally, we show that there exists a chain  of $\approx$-classes with length $\aleph_1$ containing (not necessarily closed) subsemigroups of  $\fin_2$ (Theorem \ref{disjoint}), establishing a new lower bound for the number of $\approx$-classes. 

 It seems unlikely that a usable classification of $\approx$-classes and the partial order induced by $\preccurlyeq$ can be found. However, further potentially tractable questions about the structure of $\preccurlyeq$ are, as yet, unanswered. 
For instance, what is the number of $\approx$-classes? What is the number of $\approx$-classes containing closed subsemigroups? Which preorders can be embedded in $\preccurlyeq$? More specifically, does there exist an infinite anti-chain or an infinite descending chain? Do there exist $U, V\leq \nat^\nat$ such that $U\prec V$ and whenever $U\preccurlyeq W\preccurlyeq V$ either $W\approx U$ or $W\approx V$?  


\section{Continuum Hypothesis}

The Continuum Hypothesis is the statement: $\aleph_1=2^{\aleph_0}$. G\"odel \cite{Godel1940aa} and Cohen \cite{Cohen1963aa}, \cite{Cohen1964aa} showed that it is independent of the standard axioms of set theory (ZFC).
The Continuum Hypothesis is equivalent to the existence of an uncountable family $\mathcal{F}$ of analytic functions from $\mathbb{C}$ to $\mathbb{C}$ satisfying
$$|\set{f(x)}{f\in \mathcal{F}}|\leq \aleph_0$$
for all $x\in \mathbb{C}$, as well as the existence of a function $f=(f_1,f_2)$ from $\reals$ onto $\reals^2$ such that for all  $x\in \reals$ either
$f_1$ or $f_2$ is differentiable at $x$ (see \cite{Erdos1964aa} and  \cite{Morayne1987aa}, respectively). 
For more information on the history of the Continuum Hypothesis see  \cite{Sierpinski1934ab} or \cite{Steprans201aa}.

In some sense, the above results are analytic versions of the Continuum Hypothesis; in this section we present an algebraic version.

\begin{thm}\label{CHequivalent}
The following are equivalent:
\begin{itemize}
\item[(i)] the Continuum Hypothesis;
\item[(ii)] there exists a subsemigroup $S$ of $\nat^\nat$ such that $S\approx \nat^\nat$ and for all subsemigroups $T$ of $S$ either $T\approx \nat^\nat$ or $T\approx \{1_{\nat}\}$.
\end{itemize}
\end{thm}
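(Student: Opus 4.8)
The plan is to prove the two implications separately, with the bulk of the work going into $(i)\Rightarrow(ii)$. For the easy direction $(ii)\Rightarrow(i)$, I would argue by contraposition: assuming $\aleph_1<2^{\aleph_0}$, I claim no semigroup $S$ with $S\approx\nat^\nat$ can have the stated property. Suppose such an $S$ exists; since $S\approx\nat^\nat$ we have $|S|=2^{\aleph_0}$. Build inside $S$ an increasing chain of subsemigroups by a transfinite construction of length $\aleph_1$: enumerate a generating-type set and at each countable stage adjoin one more element, taking unions at limits. Each proper stage $T_\alpha$ is generated by at most $|\alpha|+\aleph_0$ elements together with nothing else, hence (as a countably generated semigroup) satisfies $T_\alpha\preccurlyeq\{1_\nat\}$, so by hypothesis $T_\alpha\approx\{1_\nat\}$, i.e. $T_\alpha$ is countably generated as a subset of $\nat^\nat$ in the $\preccurlyeq$ sense — in particular $|T_\alpha|\le 2^{\aleph_0}$, which is not yet a contradiction, so I must be more careful: the point is that if $T_\alpha\approx\{1_\nat\}$ then $S=\bigcup_\alpha T_\alpha$ is a union of $\aleph_1$ many semigroups each contained in $\genset{C_\alpha}$ for countable $C_\alpha$, and one then needs that such a union, when $\aleph_1<2^{\aleph_0}$, still cannot generate anything $\approx\nat^\nat$. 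The cleanest route is instead: pick inside $S$ a strictly increasing chain $(T_\alpha)_{\alpha<\aleph_1}$ with $|T_\alpha|=\aleph_0$ whose union $T=\bigcup T_\alpha$ has cardinality $\aleph_1<2^{\aleph_0}$; then $T\not\approx\nat^\nat$ (cardinality), and $T\not\approx\{1_\nat\}$ either, since a countable $C$ with $T\subseteq\genset{C}$ would have to be uncountable-defeating — here one uses that $\genset{C}$ for countable $C$ has cardinality $\aleph_0$, so $T\subseteq\genset{C}$ forces $|T|\le\aleph_0$, contradiction. That kills the property.

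For $(i)\Rightarrow(ii)$, assume CH, so $2^{\aleph_0}=\aleph_1$. The goal is to construct $S\le\nat^\nat$ with $S\approx\nat^\nat$ such that every subsemigroup is either $\approx\nat^\nat$ or $\approx\{1_\nat\}$. The natural strategy is a transfinite construction of length $\aleph_1$ enumerating all potential "obstructions". First fix an enumeration $(f_\alpha)_{\alpha<\aleph_1}$ of $\nat^\nat$. I want $S$ to be large enough that $S\approx\nat^\nat$ — it suffices that $S$ contains a $\preccurlyeq$-maximum-sized piece, e.g. that $\nat^\nat\preccurlyeq S$, for which by Sierpiński's theorem it is enough that $S$ together with two extra functions generates everything; so I will throw suitable generators into $S$. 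Simultaneously, for the dichotomy I enumerate all countable subsets $D$ of $S$ (there are $\aleph_1$ of them under CH, once $|S|=\aleph_1$) and all single elements $g\in S$, and at stage $\alpha$ I handle the pair $(D_\alpha,g_\alpha)$: I must ensure that if $g_\alpha\notin\genset{D_\alpha}$-type-closure, then adding $g_\alpha$ "jumps all the way up", i.e. $\genset{D_\alpha,g_\alpha,\text{(countably many fixed helpers)}}\approx\nat^\nat$. Concretely, the subsemigroup generated by any element of $S$ not already accounted for by countably many previous elements should, together with a countable fixed set, generate $\nat^\nat$. This is a "every element is either redundant or omnipotent" requirement.

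The key technical device I expect to use is the following kind of lemma (to be isolated and proved separately): there is a single function, or a countable family, of functions in $\nat^\nat$ such that for a "generic" $h\in\nat^\nat$ one has $\nat^\nat\preccurlyeq\genset{h}\cup(\text{that countable family})$ — in other words, a lone sufficiently generic map, with countable help, generates everything. Using such a lemma, the construction becomes: build $S=\bigcup_{\alpha<\aleph_1}S_\alpha$ as an increasing chain of countable subsemigroups; at stage $\alpha$, if the element we are adding is one of the designated "generic" functions, fine; otherwise we must certify that it lies in $\genset{S_\beta}$ for some earlier $\beta$ or that it is itself generic. The requirement forces us to choose new elements to be generic unless they are already in the span of earlier ones. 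The main obstacle — and where I expect the real content to lie — is the bookkeeping that simultaneously (a) keeps $S$ closed under the semigroup operation at each countable stage, (b) guarantees every eventually-added element is either redundant or generic-with-countable-help, and (c) ensures the countably many "helper" functions used across all stages can be amalgamated into a single countable set $C$ witnessing $\nat^\nat\preccurlyeq S$ and witnessing the jumps. Under CH the counting works out because at stage $\alpha<\aleph_1$ there are only countably many finite words to close off and only $\aleph_0\cdot|\alpha|\le\aleph_1$ many subset/element pairs enumerated so far, but verifying the genericity-preservation under taking countable unions at limit stages is the delicate point, and isolating exactly the right "omnipotence" lemma about $\nat^\nat$ (presumably extracted from the semigroup-generation results referenced as \cite{Mesyan2007aa}, \cite{Mitchell2010ac}) is what makes or breaks the argument.
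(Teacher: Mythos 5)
Your (ii)$\Rightarrow$(i) argument is essentially the paper's, just phrased contrapositively: since $S\approx\nat^\nat$ forces $|S|=2^{\aleph_0}$, one picks a subsemigroup $T\leq S$ with $|T|=\aleph_1$ (e.g.\ generated by $\aleph_1$ elements); $T\not\approx\{1_{\nat}\}$ because $\genset{C}$ is countable for countable $C$, so by hypothesis $T\approx\nat^\nat$, and then $2^{\aleph_0}=|\nat^\nat|\leq|\genset{T,C}|=\aleph_1$. That part is fine, though it can be said directly rather than by contradiction.

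The genuine gap is in (i)$\Rightarrow$(ii): you describe a strategy and explicitly defer both the ``omnipotence'' lemma and the bookkeeping, and the bookkeeping as you set it up cannot succeed, because the dichotomy must hold for \emph{every} subsemigroup $T$ of $S$ --- including uncountable ones that are not countably generated --- whereas your scheme only ever certifies pairs $(D_\alpha,g_\alpha)$ with $D_\alpha$ countable. The paper resolves this with two concrete devices absent from your sketch. First, Lemma \ref{banach_yet_a_fucking_gain} (the Banach--Sierpi\'nski argument): for a fixed injection $f$ with coinfinite image and any countable list $g_1,g_2,\ldots$ there is a \emph{single} $h$ with every $g_i\in\genset{f,h}$. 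Under CH one enumerates $\nat^\nat=\set{f_\alpha}{\alpha<\aleph_1}$ and chooses $u_\alpha$ with $\set{f_\beta}{\beta<\alpha}\subseteq\genset{f,u_\alpha}$; then every \emph{uncountable subset} of $U=\set{u_\alpha}{\alpha<\aleph_1}$, together with the one function $f$, generates all of $\nat^\nat$, since it contains $u_\lambda$ with $\lambda$ arbitrarily large. This handles all uncountable subsets at once, with no stage-by-stage certification or limit-stage worries. Second --- and this is the step your proposal does not touch at all --- $U$ is not a semigroup, and the subsemigroups of $\genset{U}$ are unmanageable. Lemma \ref{zero} replaces each $u_\alpha$ by $g_\alpha$ with $u_\alpha=kg_\alpha h$ and with $g_\alpha g_\beta$ equal to the constant map $0$ for all $\alpha,\beta$; then $S=\set{g_\alpha}{\alpha<\aleph_1}\cup\{\text{const.\ }0\}$ is a semigroup with zero multiplication, so its subsemigroups are essentially just its subsets, and the required dichotomy reduces to ``uncountable subset versus countable subset,'' which the first device settles. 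Without some such collapse of the subsemigroup lattice, a construction of the kind you outline has no way to control arbitrary subsemigroups of $S$, so the proposal as it stands does not prove (i)$\Rightarrow$(ii).
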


We require two lemmas to prove Theorem \ref{CHequivalent}. 
The proof of the first is essentially Banach's argument \cite{Banach1935aa} for Sierpi\'nski's theorem  in
\cite{Sierpinski1935aa}. 

\begin{lem}\label{banach_yet_a_fucking_gain}
Let $f\in \nat^\nat$ be any injective function with $|\nat\setminus f(\nat)|=|\nat|$ and let 
$g_1, g_2, \ldots\in \nat^\nat$ be arbitrary. Then there exists $h\in \nat^\nat$ such that 
$g_1, g_2, \ldots\in \genset{f, h}$.
\end{lem}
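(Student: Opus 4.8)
The plan is to turn the hypotheses on $f$ into a concrete combinatorial skeleton inside $\nat$ and then to pack the whole sequence $g_1, g_2, \dots$ into a single map $h$ that reads off the required values with the help of powers of $f$; this is exactly the shape of Banach's proof of Sierpi\'nski's theorem. The point of the hypothesis $|\nat\setminus f(\nat)|=|\nat|$ is that it gives infinitely many disjoint infinite sets on which $f$ behaves as a pure forward shift, and these will play the role of an addressable memory.

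First I would set $A=\nat\setminus f(\nat)$, which is infinite by assumption. Since $f$ is injective, for distinct $a\in A$ the forward orbits $\set{f^k(a)}{k\in\nat}$ are pairwise disjoint and infinite, and on each orbit $f$ is just $f^k(a)\mapsto f^{k+1}(a)$. Fixing an enumeration $A=\{a_0,a_1,\dots\}$ therefore gives coordinates $(i,k)$ for the point $f^k(a_i)$, with $f(i,k)=(i,k+1)$: the level $k$ can be used to count and the track $i$ to store data. The only part of $\nat$ not reached this way is the recurrent set $R=\bigcap_{m} f^m(\nat)$, on which $f$ restricts to a bijection (every point of $R$ has a preimage, itself in $R$); this part carries no sources and must be treated separately.

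Next I would encode the sequence. Reserving a suitable family of tracks as memory, I would arrange that for every index $n$ and every potential input $y$ the value $g_n(y)$ can be located, and define $h$ so that, interleaved with powers of $f$, it transports an arbitrary input to the address coding the pair $(n,y)$ and then returns the stored value $g_n(y)$. Each $g_n$ would then be exhibited as an explicit word $W_n(f,h)$ realising exactly this load\,/\,select\,/\,read procedure, so that $W_n=g_n$ as elements of $\nat^\nat$. The substance of the argument is choosing the bijections involved so that the addressing map $(n,y)\mapsto(\text{its location})$ is injective, which is what guarantees that the many defining equations imposed on $h$ never conflict, and so that the construction assigns $h$ a value at every point of $\nat$.

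The hard part will be that $f$ is neither surjective nor invertible: one can only move forward along the orbits, so there is no fixed word that normalises an input by shifting it down to a base level. Consequently a single $h$ is forced to serve simultaneously as the loading map and as the read-off map for all of the $g_n$ at once, and the crux is to verify that the resulting system of constraints on $h$ is consistent (this is precisely where the injectivity of the addressing is needed) and total. The last step is to handle the recurrent part $R$, where the track coordinates are unavailable; I would define $h$ on $R$ so as to inject $R$ into the tracked region at the very first application, arranging that the same words $W_n$ still compute $g_n$ there while not disturbing the values of $h$ already fixed elsewhere.
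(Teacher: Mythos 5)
Your structural setup is exactly right and coincides with the paper's: writing $X_0=\nat\setminus f(\nat)$ and $X_i=f^i(X_0)$, injectivity of $f$ makes the $X_i$ pairwise disjoint infinite sets, the leftover set $R=\nat\setminus\bigcup_i X_i$ is precisely the part where $f$ restricts to a bijection, and a single $h$ must serve as both the loading map and the read-off map. But the proposal stops exactly where the proof has to begin: you name the consistency and totality of the constraints on $h$ as ``the crux'' and then leave it unresolved, and you never write down $h$, the addressing scheme, or the words $W_n$. Since the whole content of the lemma is that this system of constraints \emph{can} be satisfied by one function, what you have is a plan rather than a proof.

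The missing idea, which makes consistency automatic rather than something to verify, is to give the two roles of $h$ \emph{disjoint domains}. Partition $X_0$ into pieces $X_{0,0}, X_{0,1}, X_{0,2},\ldots$ with $|X_{0,0}|=|R|$ and $|X_{0,i}|=|\nat|$ for $i\geq 1$, and define the loading part of $h$ only on $f(\nat)=R\cup\bigcup_{i\geq 1}X_i$: let $h$ map $R$ bijectively to $X_{0,0}$ and $X_i$ bijectively to $X_{0,i}$ for $i\geq 1$. Then $hf\colon\nat\to X_0$ is a bijection, so $hf^ihf\colon\nat\to X_{0,i}$ is a bijection for each $i\geq 1$; this is your addressing map, and its injectivity comes for free (distinct indices $i$ give disjoint target sets, and for fixed $i$ it is a bijection). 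The reading part of $h$ is then defined on $X_0$, which is so far untouched: for $n\in X_{0,i}$ with $i\geq 1$ set $h(n)=g_i\bigl((hf^ihf)^{-1}(n)\bigr)$, and let $h$ be arbitrary on $X_{0,0}$. There is no circularity here, because this formula only uses values of $h$ already fixed on $\nat\setminus X_0$, and no conflict, because the sets $X_{0,i}$ are pairwise disjoint; $h$ is total because $f(\nat)$ and $X_0$ cover $\nat$. The word is then simply $g_i=h^2f^ihf$, and your worry about $R$ evaporates: for $m\in R$ one has $f(m)\in R$, so $hf(m)\in X_{0,0}$, and the same computation applies, no separate treatment of $R$ in the words being needed.
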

\proof 
Let $X_0=\nat\setminus f(\nat)$ and let $X_i=f^i(X_0)$ for all $i>0$. 
Then  clearly $X_0\cap X_i=\emptyset$ for all $i>0$. Hence if $k>j$, 
$X_j\cap X_k=f^{j}(X_{0})\cap f^{j}(X_{k-j})=\emptyset$, since $f$ is injective. It follows that $X_0, X_1, \ldots$ are disjoint infinite subsets of $\nat$. 

Let $X_{0,0}, X_{0,1}, X_{0,2},\ldots$ be sets partitioning $X_0$ such that $|X_{0,0}|=|\nat\setminus \bigcup_{i=0}^{\infty} X_i|$ and $|X_{0,i}|=|\nat|$ for all $i>0$. 
We also let  $h$ be any map taking $\nat\setminus \bigcup_{i=0}^{\infty} X_i$ bijectively to $X_{0,0}$ and $X_i$
bijectively to $X_{0,i}$ for all $i>0$.  It is straightforward to verify that $hf^ihf$ maps $\nat$ bijectively to $X_{0, i}$ for all $i>0$.  Since $h$ is not yet defined on $X_0$, we can define it by:
$$h(n)=g_i\big((hf^ihf)^{-1}(n)\big)$$
for all $n\in X_{0,i}$ and for all $i>0$ and $h$ can be defined arbitrarily on $X_{0,0}$. 

It is easy to verify that $g_i=h^2f^ihf$ for all $i>0$. \qed

\begin{lem}\label{zero}
Let $\gamma$ be an ordinal and for every $\alpha<\gamma$ let $u_{\alpha}\in \nat^\nat$. Then there exist   $h, k \in \nat^\nat$ and for every $\alpha<\gamma$ there is a mapping $g_{\alpha}\in \nat^\nat$  such that:
\begin{itemize}
\item[(i)] $g_\alpha g_{\beta}$ is the constant function with value $0$ for all $ \beta<\gamma$;
\item[(ii)] $u_{\alpha}=kg_{\alpha}h$.
\end{itemize}
\end{lem}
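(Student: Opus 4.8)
The plan is to realize all the $u_\alpha$ as ``read off a label, apply a constant-$0$ gadget, then decode'' maps, using disjointness of images to force condition (i). First I would fix a partition of $\nat$ into infinitely many infinite blocks $\nat = \bigsqcup_{i\in\nat} B_i$ together with bijections $\beta_i \colon \nat \to B_i$. I would then pick, for each $\alpha < \gamma$, an index $i(\alpha) \in \nat$ — but here one immediately runs into the fact that $\gamma$ may be uncountable, so distinct ordinals cannot always get distinct indices. The key realization is that this does not matter: what we need is that $g_\alpha(\nat)$ and $g_\beta(\nat)$ both lie inside a set on which a single fixed map $k$ collapses everything correctly, and that $g_\beta h$ lands in the ``domain-of-constancy'' of $g_\alpha$. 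So instead I would have every $g_\alpha$ send all of $\nat$ into one common infinite set $Y \subseteq \nat$, chosen so that there is a map $p \in \nat^\nat$ with $p|_Y$ constant equal to $0$; more precisely I would arrange $g_\alpha(\nat) \subseteq Y$ for all $\alpha$ and then take $g_\alpha g_\beta$ automatically constant $0$ provided $Y$ is contained in the set where a fixed auxiliary map is $0$ — but this needs $g_\alpha$ itself to do the collapsing.

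A cleaner route, and the one I would actually carry out: build $g_\alpha$ as a composite $g_\alpha = c \circ e_\alpha$, where $e_\alpha \in \nat^\nat$ is an injection encoding the pair ``$\alpha$'s data'' and $c \in \nat^\nat$ is a single fixed map independent of $\alpha$ with the property that $c$ is constant equal to $0$ on the union of all the images $e_\alpha(\nat)$. Then $g_\alpha g_\beta = c\, e_\alpha c\, e_\beta$, and since $e_\beta(\nat)$ is contained in the domain on which $c$ is constant $0$, we get $c e_\beta \equiv 0$, hence $g_\alpha g_\beta \equiv c e_\alpha(0)$, a constant; choosing $c(e_\alpha(0)) = 0$ makes it constant $0$, giving (i). For (ii) I need $u_\alpha = k g_\alpha h = k\, c\, e_\alpha\, h$, i.e. the single pair $(k, h)$ must simultaneously invert the $e_\alpha$'s through $c$ well enough to reproduce every $u_\alpha$. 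This is exactly the situation Lemma \ref{banach_yet_a_fucking_gain} is designed for: arrange things so that $h$ is a fixed injection with infinite deficiency and $k$ is the ``universal decoder'' it produces. Concretely, I would first set up $h$ and an injection $f$ with $|\nat \setminus f(\nat)| = |\nat|$ so that Lemma \ref{banach_yet_a_fucking_gain} yields a single $k$ with all the maps $\{u_\alpha \circ (\text{something})^{-1}\}$ in $\genset{f,k}$ — but since $\gamma$ may be uncountable this is too many functions to feed into that lemma directly.

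The main obstacle, then, is precisely the potential uncountability of $\gamma$: Lemma \ref{banach_yet_a_fucking_gain} only absorbs countably many prescribed functions into $\genset{f,h}$, whereas here $\gamma$ can be arbitrarily large. I expect to resolve this by observing that $k$ and $h$ need not recover the $u_\alpha$ individually from nothing — the information distinguishing $u_\alpha$ from $u_\beta$ can be carried entirely by $g_\alpha$ versus $g_\beta$, since the $g_\alpha$ are allowed to depend on $\alpha$ while only $h$ and $k$ must be uniform. So I would let $g_\alpha$ encode a full description of $u_\alpha$: partition $\nat$ into infinite blocks $B_0, B_1, B_2, \ldots$, let $g_\alpha$ map $B_j$ into a region from which $k$ can read the value $u_\alpha(j)$, with $h$ a fixed map that tags each $n\in\nat$ with its block index so that $k(g_\alpha(h(n)))$ depends on $n$ only through $\bigl(\text{block of } n,\ u_\alpha(\text{block of }n)\bigr)$ and equals $u_\alpha(n)$ — wait, this needs $g_\alpha$ to know $n$ exactly, not just its block, so I would instead take $h$ itself to be an appropriate injection (with infinite deficiency, feeding into Lemma \ref{banach_yet_a_fucking_gain}), have $g_\alpha$ act on $h(\nat)$ by ``sending $h(n)$ to a coded pair $\langle n, u_\alpha(n)\rangle$'' inside a fixed infinite coding set $Z$, make $g_\alpha$ constant $0$ outside $h(\nat)$ (this is what forces $g_\alpha g_\beta \equiv 0$, since $g_\beta(\nat)\subseteq Z$ can be arranged disjoint from $h(\nat)$), and let $k$ be the fixed decoder $\langle n, m\rangle \mapsto m$ on $Z$. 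Then $k g_\alpha h(n) = u_\alpha(n)$ as required, $g_\alpha$ carries all the $\alpha$-dependence, and $h, k$ are uniform in $\alpha$. The remaining bookkeeping — choosing the partition, the coding bijection $\nat \times \nat \to Z$, ensuring $Z \cap h(\nat) = \emptyset$ and $0 \notin Z$ so the constant-$0$ behavior is consistent, and checking injectivity/well-definedness of $g_\alpha$ — is routine, and I would present it compactly rather than grinding through it.
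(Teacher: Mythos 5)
Your final construction is essentially the paper's proof: there, $h$ is a bijection onto an infinite coinfinite set $X$ with $0\notin X$, $k$ maps $\nat\setminus X$ bijectively onto $\nat$, and $g_\alpha$ is defined to be $0$ off $X$ and to equal $(k|_{\nat\setminus X})^{-1}u_\alpha h^{-1}$ on $X$, so every $g_\alpha$ has image in $\nat\setminus X$ and hence $g_\alpha g_\beta\equiv 0$ --- exactly your ``all $\alpha$-dependence lives in $g_\alpha$, with a fixed encoder $h$ and decoder $k$'' idea, just re-encoding the value $u_\alpha(n)$ directly instead of a pair $\langle n,u_\alpha(n)\rangle$. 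One small correction to your deferred bookkeeping: the condition you need is that $0$ and all of $Z$ lie outside $h(\nat)$ (so that $g_\alpha$ sends the entire image of $g_\beta$, including the value $0$ that $g_\beta$ takes off $h(\nat)$, to $0$), i.e.\ $0\notin h(\nat)$ rather than $0\notin Z$.
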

\proof Let $X$ be any infinite coinfinite subset of $\nat$ such that $0\not\in X$, let $h:\nat\to X$ be any bijection, and let 
$k\in \nat^\nat$ be any function mapping $\nat\setminus X$ bijectively to $\nat$. Then for all $\alpha<\gamma$ define
$g_\alpha\in \nat^\nat$ by 
\begin{equation*}
g_{\alpha}(n)=
\begin{cases}
(k|_{\nat\setminus X})^{-1}u_{\alpha}h^{-1}(n)&\text{if } n\in X\\
0&\text{if } n\not\in X,
\end{cases}
\end{equation*}
where $k|_{\nat\setminus X}$ denotes the restriction of $k$ to $\nat\setminus X$.  The mappings
 $h,k$, and $g_{\alpha}$ ($\alpha<\gamma$) have the required properties. \qed\vspace{\baselineskip}

\proofref{CHequivalent} 
(i) $\Rightarrow$ (ii). Write $\nat^\nat=\set{f_{\alpha}}{\alpha<\aleph_1}$ and let $f\in \nat^\nat$ be an injection 
such that  $\nat\setminus f(\nat)$ is infinite. We define  a subset $U=\set{u_{\alpha}}{\alpha<\aleph_1}$ of $\nat^\nat$ such that every uncountable subset $V$  of $U$ satisfies $V\approx \nat^\nat$. 
Set $u_0=f_0$. If $\alpha<\aleph_1$ and $u_{\beta}$ is defined for all $\beta<\alpha$, then, by Lemma \ref{banach_yet_a_fucking_gain}, there exists $u_\alpha\in\nat^\nat$ such that 
$$\set{f_{\beta}}{\beta<\alpha}\subseteq \genset{f, u_{\alpha}}.$$

If $V$ is any uncountable subset of $U$, then for all $\beta<\aleph_1$ there exists $\lambda(\beta)$ such that $\beta<\lambda(\beta)<\aleph_1$ and $u_{\lambda(\beta)}\in V$. It follows that 
$f_{\beta}\in \genset{f, u_{\lambda(\beta)}}\subseteq \genset{f, V}$ for all $\beta<\aleph_1$ and so 
$\nat^\nat \subseteq \genset{f, V}$.  In particular, $V\approx \nat^\nat$. 

 Applying Lemma \ref{zero} to $U$ and $\gamma=\aleph_1$ we obtain $g_{\alpha}\in \nat^\nat$ for all $\alpha<\aleph_1$ and $h,k\in \nat^\nat$ with the properties given in the lemma.  
We set $S$ to be the semigroup consisting of $\set{g_{\alpha}}{\alpha<\aleph_1}$ and the constant mapping with value $0$.  To verify that $S$ satisfies (ii), let $T$ be any subset of $S$. If $T$ is uncountable, then $\genset{T, h,k}$ contains an uncountable subset of $U$ and so $T\approx \nat^\nat$ from above. If $T$ is countable, then $T\approx \{1_{\nat}\}$, by definition. \vspace{\baselineskip}

(ii) $\Rightarrow$ (i). Let $T$ be any subset of $S$ such that  $|T|=\aleph_1$. Then, by assumption,  $T\approx \nat^\nat$ and so $2^{\aleph_0}=|\nat^\nat|=|T|=\aleph_1$, as required. \qed


\section{The structure under $\mathfrak{F}_2$}

The following theorem suggests that to understand the structure of $\preccurlyeq$ we should first understand its structure on subsemigroups of $\fin_2$. 

\begin{thm}\label{cantor_bendix}
Let $S$ be a closed subsemigroup of $\nat^\nat$ of cardinality $2^{\aleph_0}$. Then there exists a closed subsemigroup $T$ of $\fin_2$ such that $|T|=2^{\aleph_0}$ and $T\preccurlyeq S$.
\end{thm}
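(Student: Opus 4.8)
The plan is to mimic the classical proof that every uncountable Polish space contains a Cantor set, and then to push the resulting compact set into $\fin_2$ by post-composing with a single cleverly chosen $2$-valued map, arranging the images to form a right-zero semigroup (a semigroup in which $ab=b$ for all $a,b$).

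First I would observe that, being closed in the Polish space $\nat^\nat$, the semigroup $S$ is itself Polish, and since $|S|=2^{\aleph_0}$ it is uncountable; so by the Cantor--Bendixson theorem $S$ contains a nonempty perfect set $P$. A fibre argument then lets me shrink $P$: partitioning $P$ according to the value taken at a coordinate and passing to an uncountable piece, I can freeze a coordinate $c_1$ to a value $u$, and then a second coordinate $c_2\neq c_1$ to a value $v\neq u$ (the second value can be kept distinct from the first because an uncountable set is not concentrated at a single point). Calling the resulting perfect subset $P$ again, I have $g(c_1)=u$ and $g(c_2)=v$ for all $g\in P$.

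The heart of the matter, and the step I expect to be the main obstacle, is to find a set $Q\subseteq\nat$ with $u\in Q$ and $v\notin Q$ such that $\set{g^{-1}(Q)}{g\in P}$ is uncountable. Granting this, define $q\in\nat^\nat$ by $q(w)=c_1$ if $w\in Q$ and $q(w)=c_2$ otherwise, and set $\Psi(g)=qg$. Each $\Psi(g)$ has image inside $\{c_1,c_2\}$ and, because $q(u)=c_1$ and $q(v)=c_2$, fixes $c_1$ and $c_2$; hence any two such maps $a,b$ satisfy $ab=b$, so $\Psi(P)$ is a right-zero subsemigroup of $\fin_2$. Since $\Psi(g)^{-1}(c_1)=g^{-1}(Q)$, the map $\Psi$ separates the uncountably many sets $g^{-1}(Q)$, so $\Psi(P)$ is uncountable, and clearly $\Psi(P)\subseteq\genset{S,q}$.

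To produce $Q$ I would build a Cantor scheme $(\sigma_t)_{t\in 2^{<\nat}}$ inside $P$ whose splitting coordinates $d_t$ are distinct and avoid $c_1,c_2$, choosing at each node a split in which one of the two branch values at $d_t$ is a natural number not previously used and distinct from $u,v$. Placing all these ``fresh'' values into $Q$ (together with $u$) and everything else outside $Q$ makes every split bichromatic, so that the colours of $\Psi(g)$ along the splitting coordinates recover the branch of $g$; this forces $\Psi$ to be injective on the body of the scheme, whence $\set{g^{-1}(Q)}{g\in P}$ is uncountable. The degenerate case is when no fresh value is available at some node, which can only occur when the relevant cylinder takes finitely many values overall; there a single coordinate already carries uncountably much information, since an uncountable subset of a finite-alphabet product cannot have all of its fibres $\set{g^{-1}(a)}{g\in P}$ countable, so a singleton $Q=\{a\}$ suffices. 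Making this dichotomy precise is the technical crux. Finally I would assemble the pieces: as a continuous image of the Polish space $P$, the set $\Psi(P)$ is analytic and uncountable, so by the perfect set theorem it contains an uncountable compact set $T$; because $\Psi(P)$ is right-zero, every subset of it is a subsemigroup, so $T$ is a closed (being compact in the Hausdorff space $\nat^\nat$) subsemigroup of $\fin_2$ with $|T|=2^{\aleph_0}$, and $T\subseteq\Psi(P)\subseteq\genset{S,q}$ gives $T\preccurlyeq S$.
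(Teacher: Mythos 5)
Your overall architecture is essentially the paper's: Cantor--Bendixson to get a perfect set, a Cantor scheme whose splitting values are $2$-coloured, and post-composition with a single $2$-valued map $q$ so that the image lands in $\fin_2$ and $T\preccurlyeq S$ is witnessed by the one-element set $\{q\}$. Your right-zero device (freezing two coordinates $c_1,c_2$ to values $u\in Q$, $v\notin Q$, so that every $qg$ fixes $c_1$ and $c_2$, making $\Psi(P)$ a right-zero semigroup whose every subset is a subsemigroup) is a genuinely pleasant alternative to the paper's step of adjoining the transposition $(0\ 1)$ and a constant map. However, there is a real gap exactly at the step you call the crux, and it is not merely a matter of ``making the dichotomy precise'': your colouring rule is wrong. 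At a node $t$ you choose a split with values $a_t$ (fresh) and $b_t$ (uncontrolled), and you decree that $Q$ consist of $u$ together with all designated fresh values. Nothing prevents the companion value $b_t$ from equalling a fresh value $a_s$ chosen at an \emph{earlier} node $s$ (e.g.\ node $s$ splits with values $5,7$ and you put $5$ into $Q$; node $t$ later splits with values $5,9$, forcing the fresh value $9$ into $Q$ as well). Then both branch values at $t$ lie in $Q$, the split at $t$ is monochromatic under $q$, and the colours of $\Psi(g)$ along the splitting coordinates no longer recover the branch. Since the two subtrees above $t$ are controlled only at their own disjoint sets of splitting coordinates, you cannot argue that such a pair of branches is separated elsewhere, so injectivity --- and with it the uncountability of $\{g^{-1}(Q) : g\in P\}$, which is the whole point --- is unproven. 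The repair is exactly the bookkeeping in the paper's Lemma \ref{perfecter}, Case 2: each natural number is assigned a colour at most once globally, and at each node the fresh value receives the colour \emph{opposite} to that of its companion whenever the companion has already been coloured; this is what makes the two colour classes ($\iota_0(2^{<\nat})$ and $\iota_1(2^{<\nat})$ in the paper's notation) disjoint and every split bichromatic.

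Two smaller problems occur in your degenerate case. First, ``no fresh value available above $\sigma_t$'' gives only that all \emph{splitting} values above $\sigma_t$ lie in a finite set $A$, not that the cylinder takes finitely many values overall: on coordinates where the cylinder is constant, the common value is arbitrary. Your counting argument survives this correction, since elements of the cylinder are determined by their restrictions to the non-constant coordinates, where only values from $A$ occur. Second, taking $Q=\{a\}$ is compatible with your right-zero device only if $u=a$, but you froze $u$ and $v$ \emph{before} the value $a$ was identified; you must redo the freezing afterwards (choose $c_1$ so that uncountably many distinct sets $g^{-1}(a)$ contain $c_1$, then choose $c_2$ and $v\neq a$ by the same countable-union argument). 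Both points are fixable, but as written the order of your construction does not cohere; the paper's Case 1 sidesteps this by a minimality argument that anchors one colour class at a single value occurring in splits above every node.
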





We follow the convention that if $n\in\nat$, then  $n=\{0,1,\ldots, n-1\}$.
Let $\mathcal{C}=2^{\nat}$ denote the Cantor set (i.e., all functions from $\nat$ to $\{0,1\}$). Then it is straightforward to prove that $\C\approx \fin_2$.  

For a subset $A$ of $\nat$, we denote the set of  finite sequences of elements of $A$  by $A^{<\nat}$  and we write $x=(x(0), x(1), \ldots, x(n-1))$. 
The length $n$ of $x$ is denoted by $|x|$, and 
we define $$x\jas m=(x(0), x(1), \ldots, x(n-1), m)\quad\text{ where }m\in \nat.$$

If $f\in\nat^\nat$,  then we denote the restriction $(f(0), f(1), \ldots, f(m-1))$ of $f$ to the set $m={\{0,1,\ldots,m-1\}}$ by $f|_m$.
Similarly, if  $x\in \nat^{<\nat}$ and $|x|\geq m$, then $x|_m=(x(0), x(1), \ldots, x(m-1))$.





The proof of the following lemma is similar to that of the fact that every perfect Polish space contains a copy of the Cantor set given in \cite[Theorem 6.2]{Kechris1995aa}.

\begin{lem}\label{perfecter} Let $S$ be a closed subset of $\ntrans$ with $|S|=2^{\aleph_0}$. Then there exist $U\subseteq S$ and $f \in \mathcal{C}$ such that $U$ is homeomorphic to $\mathcal{C}$ and 
the map $\lambda: U \longrightarrow \mathcal{C}$ defined by $\lambda(g)=f\circ g$ for all $g\in U$ is a homeomorphism from $U$ to $\lambda(U)$.
\end{lem}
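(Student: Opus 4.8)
The strategy is a Cantor-scheme construction mimicking the classical proof that a perfect Polish space contains a Cantor set, but carried out in a way that simultaneously controls the *values* $g(n)$ of the functions in $U$ so that post-composition with a single fixed $f\in\C$ stays a homeomorphism onto its image. Since $|S| = 2^{\aleph_0}$ and $S$ is closed in the Polish space $\ntrans$, the Cantor-Bendixson theorem gives a nonempty perfect closed subset $P\subseteq S$; I would work inside $P$, so I may assume $S$ itself is perfect and nonempty.

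First I would build a family $\set{U_x}{x\in 2^{<\nat}}$ of nonempty relatively open subsets of $S$, indexed by finite binary strings, together with an increasing sequence of ``resolution levels'' $m_0 < m_1 < \cdots$ in $\nat$, satisfying: (a) $\overline{U_{x\jas 0}}, \overline{U_{x\jas 1}} \subseteq U_x$ and $\overline{U_{x\jas 0}}\cap\overline{U_{x\jas 1}} = \emptyset$; (b) $\mathrm{diam}(U_x) \to 0$ along every branch (so that $\bigcap_n U_{\sigma|_n}$ is a singleton for each $\sigma\in\C$), which amounts to requiring that all $g\in U_x$ agree on the initial segment $m_{|x|}$; and crucially (c) at the splitting from $U_x$ to its two children, there is some coordinate $n_x < m_{|x|+1}$ on which every function in $U_{x\jas 0}$ takes one value and every function in $U_{x\jas 1}$ takes a *different* value, with these two values lying in $\{0,1\}$ after we later compose with $f$. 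The existence of the splitting in (a) uses that $S$ is perfect (every point is a limit of other points, so nonempty open subsets of $S$ are not singletons and can be split). For (c), note that two distinct functions in $\ntrans$ differ at some coordinate $n$; shrinking $U_{x\jas 0}, U_{x\jas 1}$ to fix the value at that $n$ is legitimate since basic open sets in $\ntrans$ are determined by finitely many coordinates.

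Next I would define $f\in\C$ and the set $U$. Let $U = \bigcup_{\sigma\in\C}\bigcap_n U_{\sigma|_n}$; by (a) and (b) the map $\sigma\mapsto g_\sigma$ (the unique element of $\bigcap_n U_{\sigma|_n}$) is a well-defined continuous bijection from the compact space $\C$ onto $U$, hence a homeomorphism, so $U\cong\C$. To define $f$: at each splitting node $x$ we have a coordinate $n_x$ and two distinct values $a_x = g(n_x)$ for $g\in U_{x\jas 0}$ and $b_x = g(n_x)$ for $g\in U_{x\jas 1}$; I will set $f(n_x) = 0$ if $a_x < b_x$... — more carefully, I want $f(a_x)$ and $f(b_x)$ to record which branch was taken. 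The cleanest route: arrange during the construction that the coordinates $\set{a_x, b_x}{x\in 2^{<\nat}}$ used as ``witness values'' are all distinct (possible because at each step we have infinitely many coordinates not yet constrained and $S$ perfect lets us split while landing in fresh values — this is the point requiring the most care), then define $f$ on $\set{a_x}{x}\cup\set{b_x}{x}$ by $f(a_x)=0$, $f(b_x)=1$, and $f$ arbitrarily (say $0$) elsewhere. Then $f\in\C$, and for $g_\sigma, g_\tau\in U$ with $\sigma\neq\tau$, taking the shortest $x$ with $\sigma,\tau$ disagreeing past $x$, the composite $f\circ g_\sigma$ and $f\circ g_\tau$ differ at $n_x$ (one is $f(a_x)=0$, the other $f(b_x)=1$); hence $\lambda(g) = f\circ g$ is injective on $U$. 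Continuity of $\lambda$ is automatic since composition is continuous in $\ntrans$, and $\lambda$ restricted to the compact set $U$ is therefore a homeomorphism onto $\lambda(U)$, with $\lambda(U)\subseteq\C$ since $f\in\C$.

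The main obstacle is bookkeeping condition (c) together with the disjointness of the witness values $\set{a_x,b_x}{x}$: I must verify that at stage $x$, with only finitely many coordinates of each $g\in U_x$ already pinned down (those below $m_{|x|}$) and finitely many values already used as witnesses, I can still split $U_x$ into two nonempty pieces witnessed by a *fresh* pair of values. This follows because $U_x$ is a nonempty relatively open subset of the perfect set $S$, hence infinite, hence contains two functions $g\neq g'$; they differ at some coordinate $n$, and if the pair $\{g(n), g'(n)\}$ happens to reuse an old witness value I can instead perturb within $U_x$ — but the honest fix is to observe that since $g\ne g'$ and both extend the common initial segment, and $S$ is perfect, we can actually find *infinitely many* functions in $U_x$, so the set of coordinate-values they realize at coordinates $\geq m_{|x|}$ is infinite, and we may choose $n$ and a pair of functions realizing two *new* values there. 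Once that is nailed down, everything else is routine.
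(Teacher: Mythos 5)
Your overall architecture matches the paper's: Cantor--Bendixson to reduce to a perfect set, a Cantor scheme indexed by $2^{<\nat}$ with a designated splitting coordinate at each node, and a single $f\in\C$ defined on witness values so that $\lambda(g)=f\circ g$ separates the two branches. The gap is in the step you yourself flag as ``requiring the most care'': you claim that, because $U_x$ contains infinitely many functions, the set of values these functions realize at coordinates $\geq m_{|x|}$ is infinite, so a fresh pair of witness values can always be chosen. This inference is false. Take $S=2^{\nat}\subseteq\nat^{\nat}$: it is closed, perfect, and of cardinality $2^{\aleph_0}$, yet every function in it takes values only in $\{0,1\}$, so at every splitting node the witness pair is forced to be $\{0,1\}$ and no fresh values exist after the first split. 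Your requirement that the pairs $\{a_x,b_x\}$ be distinct across all nodes is therefore unachievable in general, and without it the assignment $f(a_x)=0$, $f(b_x)=1$ can be inconsistent (the same natural number would be required to map to both $0$ and $1$).

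What is actually needed is weaker --- only that the set of $0$-witnesses $\set{a_x}{x\in 2^{<\nat}}$ be disjoint from the set of $1$-witnesses $\set{b_x}{x\in 2^{<\nat}}$ --- but arranging even this requires the case analysis that the paper performs. When the branching values below some node are eventually confined to a finite set $A$ (as happens for $S=2^{\nat}$), the paper takes such an $A$ minimal, fixes $a\in A$, and uses minimality to show that below every node one can branch at a place where $a$ is among the available values; then $\iota_0$ is constantly $a$ while $\iota_1$ takes values in $A\setminus\{a\}$, and disjointness of the two witness sets is automatic. Only in the complementary case, where unused values are genuinely always available, does an argument like yours go through, and even there one must record whether a reused value previously served as a $0$- or a $1$-witness so as to assign it consistently. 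Without this dichotomy the construction breaks on the most basic example, so the proof as proposed is incomplete.
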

\begin{proof}
By assumption, $S$ is a closed subset of $\nat^\nat$, and so $S$ is a Polish space. Since $|S|=2^{\aleph_0}$,  the Cantor-Bendixson Theorem \cite[Theorem 6.4]{Kechris1995aa} implies that there exists  a perfect subset  of $S$, i.e. a closed set with no isolated points.  Assume without loss of generality that $S$ is perfect. Let 
$$\S=\set{f|_n\in \nat^{<\nat}}{n\in\nat,\ f\in  S}$$
(the set of finite restrictions of elements in $S$),  and if $x\in \S$, then define 
$$[x]_\S=\set{y\in \S}{y|_{|x|}=x}$$
(the set of finite extensions of $x$ in $\S$), and  
 $$E(x)=\set{i\in \nat}{x\jas i\in \S}.$$ 

We begin by showing that there exist $\iota_0, \iota_1: 2^{<\nat}\to \nat$ and $\sigma:2^{<\nat}\to \S$ such that 
\begin{enumerate}
\item[\rm (i)] $\iota_0(2^{<\nat})\cap \iota_1(2^{<\nat})=\emptyset$;
\item[\rm (ii)] $\sigma(x\jas j)\in[\sigma(x)\jas \iota_j(x)]_\S$ for all $j\in\{0,1\}$
and for all $x\in 2^{<\nat}$.
\end{enumerate}

Since $S$ is perfect, for every $x\in \S$  there exists $y\in [x]_\S$  such that $|E(y)|\geq 2$.
 There are two cases to consider.\vspace{\baselineskip}

\noindent{\bf Case 1.} \emph{there exist finite $A\subseteq \nat$ and  $x\in \mathcal{S}$    such that  
every $y\in [x]_\S$ satisfying $|E(y)|\geq 2$ also has the property that 
$E(y)\subseteq A$.}

Assume without loss of generality that the set $A$ is minimal with the above property, i.e. for every $B \subsetneq A$ and 
$z\in\S$ there exists $y\in  [z]_\S$ such that  $|E(y)|\geq 2$, and 
$E(y) \not \subseteq B$.  
Let $a\in A$ be arbitrary but fixed. Then for all $x'\in  [x]_\S$  there exists $y\in [x']_\S$ such that 
$|E(y)|\geq 2$ and $E(y)\not \subseteq A\setminus \{a\}$. But  $E(y)\subseteq A$ and so $a\in E(y)$. We have shown that:
\begin{enumerate}
\item[$(\star)$] for all $x'\in  [x]_\S$  there exists $y\in [x']_\S$ such that $|E(y)|\geq 2$ and $a\in E(y)$. 
\end{enumerate} 
  We  now use ($\star$) to recursively define $\iota_0, \iota_1: 2^{<\nat}\to \nat$ and $\sigma:2^{<\nat}\to \nat^{<\nat}$ satisfying (i) and (ii) above. As a first step, let $\sigma(\emptyset)\in  [x]_\S$ be any element such that  $|E(\sigma(\emptyset))|\geq 2$ and $a\in E(\sigma(\emptyset))$.  
 
 Assume that $\sigma(z)\in [x]_\S$ is defined for some $z\in 2^{<\nat}$ such that $|E(\sigma(z))|\geq 2$ and $a\in E(\sigma(z))$. Define $\iota_0(z)=a$ and $\iota_1(z)$ to be any element in $E(\sigma(z))\setminus \{a\}$. By ($\star$) we can define  $\sigma(z\jas j)\in  [\sigma(z)\jas \iota_j(z)]_\S$   such that $|E(\sigma(z\jas j))|\geq 2$ and $a\in E(\sigma(z\jas j))$ for $j\in \{0,1\}$.
 \vspace{\baselineskip}

\noindent{\bf Case 2.} \emph{for all finite $A\subseteq \nat$ and for all $x\in \S$ there exists $y\in [x]_{S}$ with $|E(y)|\geq 2$ but $E(y)\not\subseteq A$.}

List the elements of $2^{<\nat}$ as $x_0, x_1, \ldots$ in any way such that $|x_j|< |x_k|$ implies $j< k$. Let $\sigma(x_0)\in \S$ be such that $|E(\sigma(x_0))|\geq 2$ and let $\iota_0(x_0),\iota_1(x_0)\in E(\sigma(x_0))$ be such that $\iota_0(x_0)\not=\iota_1(x_0)$.
Assume that for all $j<k$ we have already defined 
$\sigma(x_j)$, $\iota_0(x_j)$, and $\iota_1(x_j)$ such that 
$\sigma(x_j)\jas \iota_i(x_j)\in \S $ for $i\in\{0,1\}$.
Set $A_k= \set{\iota_0(x_l), \iota_1(x_l)}{l<k}$.  Write $x_k=\mbox{$x_j$}\jas r$ for some $r\in\{0,1\}$ and $j\in\nat$. Then $j<k$  from the order on the elements of $2^{<\nat}$. Hence by the assumption of this case there exists $\sigma(x_k)\in [\sigma(x_j)\jas \iota_r(x_j)]_{S}$  such that $|E(\sigma(x_k))|\geq 2$ and $E(\sigma(x_k))\not\subseteq A_k$. Let $m,n \in E(\sigma(x_k))$ be such that $m\not =n$ and $m\not\in A_k$. If $n\in A_k$, then $n=\iota_l(x_j)$ for some $j<k$ and some $l\in\{0,1\}$. In this case, set $\iota_l(x_k)=n$ and set $\iota_{l+1\pmod 2}(x_k)=m$. If $n\not\in A_{k}$, then set $\iota_0(x_k)=m$ and $\iota_1(x_k)=n$. 
\vspace{\baselineskip}

In either case,  the functions $\iota_0, \iota_1$ and $\sigma$ have the required properties. 

We will now use $\iota_0, \iota_1$ and $\sigma$ to define $U$ and $f$. If $x\in \C$, then $\bigcap_{n\in\nat} [\sigma(x|_n)]$ is a singleton in $S$ since $S$ is closed and hence complete.  
 Let  $\{\Psi(x)\}=\bigcap_{n\in\nat} [\sigma(x|_n)]$. Then $\Psi:\C\to P$ is a homeomorphism from $\C$ to $\Psi(\C)$ and we set $U=\Psi(\C)$.  Let $f\in \mathcal{C}$ be any mapping  such that 
\begin{equation*}
f(m)=
\begin{cases}
0&\text{if } m\in \iota_0(2^{<\nat})\\
1&\text{if } m\in \iota_1(2^{<\nat}).
\end{cases}
\end{equation*}
Then $\lambda: U\to \C$ defined by $\lambda(g)=f\circ g$ is continuous, since $\nat^{\nat}$ is a topological semigroup.  It only remains to prove that $\lambda$ is injective. Let $\Psi(x),\Psi(y)\in U=\Psi(\C)$ such that $\Psi(x)\not=\Psi(y)$. Then, without loss of generality,  there exist $m\in\nat$ and $z\in 2^{<\nat}$ such that $x|_m=z\jas 0$ and $y|_m=z\jas 1$. 
It follows that  $\sigma(z)\jas \iota_0(z)$ is a restriction of $\sigma(x|_m)=\sigma(z\jas 0)$ and  $\sigma(z)\jas \iota_1(z)$ is a restriction of $\sigma(y|_m)=\sigma(z\jas 1)$. The number  $|\sigma(z)|$ is in the domain of  $\sigma(z)\jas \iota_0(z)$  and hence of $\sigma(x|_m)=\sigma(z\jas 0)$ and so 
$$
\Psi(x)\big(|\sigma(z)|\big)=\sigma(x|_m)\big(|\sigma(z)|\big)=\sigma(z\jas 0)\big(|\sigma(z)|\big)
=\big(\sigma(z)\jas \iota_0(z)\big)\big(|\sigma(z)|\big)=\iota_0(z).
$$
Hence 
$$\lambda(\Psi(x))(|\sigma(z)|)=(f\circ \Psi(x))(|\sigma(z)|)=f\big(\Psi(x)(|\sigma(z)|)\big)=f(\iota_0(z))=0.$$
Likewise, $\Psi(y)\big(|\sigma(z)|\big)=\iota_1(z)$ and so $\lambda(\Psi(y))\big(|\sigma(z)|\big)=f(\iota_1(z))=1.$
Therefore $\lambda(\Psi(x))\not=\lambda(\Psi(y))$ and so $\lambda$ is injective. 
\end{proof}

\proofref{cantor_bendix} 
Let $S$ be a closed subsemigroup of $\ntrans$ with $|S|=2^{\aleph_0}$. Then, by Lemma \ref{perfecter}, there exist $U\subseteq S$ and $f \in \mathcal{C}$ such that $U$ is homeomorphic to $\mathcal{C}$ and 
the map $\lambda: U \longrightarrow \mathcal{C}$ defined by $\lambda(g)=f\circ g$ for all $g\in U$ is a homeomorphism from $U$ to $\lambda(U)$.
 
Then $\lambda(U)$, being homeomorphic to $\C$, is compact. Hence, since $\nat^\nat$ is Hausdorff, $\lambda(U)\subseteq \C$ is closed. Let $T$ be the subsemigroup generated by $\lambda(U)$, the transposition $(0\ 1)\in S_{\infty}$, and the constant function with value $0$. Then $T$ is the union of $\lambda(U)$,  $\set{(0\ 1)\circ \lambda(u)}{u\in U}$, and the constant functions with value $0$ and $1$. In particular, $T\leq \fin_2$  and $T$ is closed (being the finite union of closed sets). Also  $|T|=2^{\aleph_0}$ and $T\approx \lambda(U)$.  Furthermore, $\lambda(U)=\set{f\circ g}{g\in U}\subseteq \genset{U,f}$ and so
 $T\approx \lambda(U) \subseteq \genset{U,f}\approx U\subseteq S$. In particular, $T\preccurlyeq S$.
\qed


\section{Almost disjoint families}\label{almost_disjoint_section}

If $A$ is a subset of $\nat$, then define $s_A\in \nat^\nat$ by 
\begin{equation}
s_A(n)=
\begin{cases}
n&\text{if } n\in A\\
0&\text{if } n\not\in A.\\
\end{cases}
\end{equation}
The \emph{power set} of $A\subseteq \nat$ is denoted by $\mathcal{P}(A)$. 
If $\A\subseteq \mathcal{P}(\nat)$, then set
\begin{equation}\label{s_a}
S_{\A}=\set{s_A\in\nat^\nat}{A\in \A}.
\end{equation}
Note that $S_{\A}$ is a subsemigroup of $\nat^\nat$ if and only if $\A$ is closed under taking finite intersections.  

A set $\A$ of subsets of $\nat$ is called \emph{almost disjoint} if $A\cap B$ is finite for all $A,B\in \A$. It is not hard to show that there exist almost disjoint $\A$ such that $|\A|=2^{\aleph_0}$; see, for example, \cite[Theorem 1.3]{Kunen1983aa}.  
Let 
$$\fin=\bigcup_{n\in\nat} \fin_n.$$ 
In this section we prove the following theorem.

\begin{thm} \label{lost_monoid} 
If $\mathcal{A}$ is an almost disjoint family of cardinality $2^{\aleph_0}$,  then $S_{\A}$ is 
incomparable under $\preccurlyeq$ to $\fin$ and $\fin_n$ for all $n\geq 2$.
\end{thm}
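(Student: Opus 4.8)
The plan is to prove the two directions of incomparability separately. Since $S_\A \subseteq \fin_2$ (note $s_A(\nat) \subseteq \{0\} \cup A$ but more importantly $|s_A(\nat)|$ can be infinite — wait, actually $s_A(n) = n$ for $n \in A$, so $|s_A(\nat)| = |A| + 1$ which may be infinite), we actually do \emph{not} have $S_\A \subseteq \fin$ in general; indeed each $s_A$ with $A$ infinite has infinite image. So the two things to rule out are: (1) $S_\A \preccurlyeq \fin$ (equivalently $S_\A \preccurlyeq \fin_n$ for some $n$, since the $\fin_n$ are cofinal in $\fin$ under $\preccurlyeq$ — actually one must be slightly careful, but $\fin \approx \fin$ and each element of $\fin$ lies in some $\fin_n$), and (2) $\fin_n \preccurlyeq S_\A$ for any $n \geq 2$ (which gives $\fin \not\preccurlyeq S_\A$ as well, since $\fin_2 \preccurlyeq \fin$).

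For direction (1), suppose toward a contradiction that $S_\A \subseteq \genset{\fin, C}$ with $C$ countable, or even $S_\A \subseteq \genset{\fin_n, f, g}$. The key structural fact is that $\fin$ is an ideal of $\nat^\nat$: any product involving a factor from $\fin$ lies in $\fin$, hence has finite image. Since each $s_A$ with $A$ infinite has infinite image, $s_A$ cannot be written as a word in which any generator comes from $\fin$; therefore $s_A \in \genset{C}$ for every infinite $A \in \A$. But $\genset{C}$ is a countable semigroup (a countable union of finite products of elements of the countable set $C$), so it contains only countably many of the $s_A$. Since $\A$ is almost disjoint of size $2^{\aleph_0}$, all but finitely many of its members are infinite, giving $2^{\aleph_0}$ distinct $s_A$ — a contradiction. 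This handles $S_\A \not\preccurlyeq \fin$ and a fortiori $S_\A \not\preccurlyeq \fin_n$.

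For direction (2) — showing $\fin_n \not\preccurlyeq S_\A$, and this is the harder part — suppose $\fin_n \subseteq \genset{S_\A, f, g}$ for some $f, g \in \nat^\nat$ and some $n \geq 2$. The obstacle is that $S_\A$ is large ($2^{\aleph_0}$ elements) so a naive cardinality count fails; instead I would exploit the almost-disjointness to control products. The central observation is that for $A, B \subseteq \nat$ one has $s_A s_B = s_{A \cap B}$ up to a correction on a finite set: more precisely $s_A(s_B(m)) = m$ iff $m \in A \cap B$, and $= 0$ otherwise, so $s_A s_B = s_{A\cap B}$ exactly. Hence any product of the $s_A$'s collapses to $s_D$ where $D$ is a finite intersection of members of $\A$, so $D$ is \emph{finite}; thus every element of the subsemigroup generated by $S_\A$ alone that is a product of length $\geq 2$ has finite image, and the only infinite-image elements are the generators $s_A$ themselves. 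Now consider an arbitrary element $t \in \fin_n$ with $|t(\nat)| = n$ and written as a word $w$ in $S_\A \cup \{f, g\}$. One shows that the "stable part" of the computation — after deleting leading/trailing $f, g$ blocks and any interior $s_A s_B$ collapses — forces $t$ to agree, off a finite set determined by $f$ and $g$, with a map of the form $f^i s_D g^j$ or $f^i s_A g^j$ where $D$ is finite and $A \in \A$. Since $f, g$ are fixed, the family of maps $\{f^i s_A g^j : A \in \A,\ i,j \leq N\}$, restricted to the cofinite set where the finite corrections vanish, realizes only "few" distinct behaviours on the relevant finite sets because the almost-disjointness forces any two $s_A, s_B$ to agree on a cofinite set up to their values on $A \triangle B$ — so one cannot produce all $2^{\aleph_0}$ of the rank-$n$ maps in $\fin_n$ (in fact $\fin_n$ has $2^{\aleph_0}$ elements with pairwise \emph{infinite} "difference sets", whereas the images of $S_\A$-words have pairwise finite difference sets modulo the $f,g$-corrections, a contradiction for $n \geq 2$).

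The main obstacle, as indicated, is direction (2): making rigorous the claim that conjugating the semigroup $S_\A$ by a fixed pair $f,g$ and taking the generated semigroup cannot reach all of $\fin_2$. The cleanest route is probably to fix $2^{\aleph_0}$ many functions $t_X \in \fin_2$ indexed by $X \subseteq \nat$ (say $t_X$ the $0/1$-valued map that is the characteristic function of $X$), note that $t_X \triangle t_Y$ is infinite whenever $X \triangle Y$ is infinite, and then show that if $t_X = w_X(s_{A_1}, \dots, s_{A_k}, f, g)$ then $t_X$ is determined, up to a finite set depending only on $f$, $g$, and the word length, by which single $s_A$ (or which finite intersection) appears in "decisive position" in $w_X$ — yielding only countably many possibilities for $t_X$ modulo finite modification, against $2^{\aleph_0}$ with pairwise infinite differences. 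I expect the bookkeeping for "decisive position" to be the delicate step, handled by a careful case analysis on the first and last occurrences in the word of a generator with infinite image.
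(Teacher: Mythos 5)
Your first direction is fine and is exactly the paper's argument: $\fin$ is an ideal of $\nat^\nat$, so $S_{\A}\subseteq\genset{\fin,C}$ would force the uncountably many $s_A$ with $A$ infinite into the countable semigroup $\genset{C}$. (Minor quibble: $S_{\A}\preccurlyeq\fin$ is \emph{not} equivalent to $S_{\A}\preccurlyeq\fin_n$ for some $n$, but your ideal argument applies to $\fin$ directly, so nothing is lost.)

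The second direction, however, has a genuine gap, and it is located exactly where you flagged the ``delicate step.'' Your intended contradiction rests on the claim that the elements of $\fin_2$ reachable as words in $S_{\A}\cup\{f,g\}$ have pairwise finite difference sets modulo corrections, i.e.\ that there are only countably many of them up to finite modification. That claim is false. Take $g=1_{\nat}$ and $f\in\C$ with $f(0)=0$ and $f^{-1}(1)$ infinite; then $f\circ s_A$ is the characteristic function of $A\cap f^{-1}(1)$, and for a suitable almost disjoint family these $2^{\aleph_0}$ maps lie in $\fin_2$ and have pairwise \emph{infinite} symmetric differences. So $\genset{S_{\A},f,g}\cap\fin_2$ can genuinely be a family of size $2^{\aleph_0}$ with pairwise infinite differences, and no counting-modulo-finite-sets argument can succeed. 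A second, related problem is your reduction of a general word to the form $f^is_Dg^j$: the identity $s_As_B=s_{A\cap B}$ only collapses \emph{adjacent} occurrences of generators from $S_{\A}$, whereas a typical word has the shape $u_ms_{A(m-1)}u_{m-1}\cdots s_{A(0)}u_0$ with elements $u_i$ of the countable set interleaved, and nothing collapses across the $u_i$. The paper's proof confronts this head on: it partitions $\nat$ into infinite pieces $N(u_0,\ldots,u_m)$, one for each tuple from the countable set, proves a structure lemma (its Lemma 4.2) showing that on such a piece every element of $u_mS_{\A}u_{m-1}\cdots S_{\A}u_0$ is almost equal to either a constant or a function controlled by the first ``critical'' index where injectivity or almost-containment in a member of $\A$ fails, and then diagonalizes piece by piece to build a single $f\in\fin_2$ avoided by every such product. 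You would need some analogue of that lemma and the piecewise diagonalization; the global ``few functions modulo finite sets'' shortcut does not exist.
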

 If we identify $\mathcal{P}({\nat})$ with $2^{\nat}$ equipped with the product topology, then the function $A\mapsto s_A$ is a homeomorphism from $2^{\nat}$ to $S_{\mathcal{P}({\nat})}$. Thus, since $2^{\nat}$ is compact,  $S_{\mathcal{P}({\nat})}$ is closed in $\nat^{\nat}$ and so $S_{\A}$ is closed in $\nat^{\nat}$ if and only if $\A$ is closed in $2^{\nat}$.
  For example, if $\A$ is the almost disjoint family defined as the infinite paths starting at the root of an infinite binary tree labelled by the natural numbers (without repeats), then $S_{\A}$ is closed. 
Hence, by Theorem \ref{cantor_bendix}, there exists $T\preccurlyeq S_{\A}$ such that $\{1_{\nat}\}\prec T\preccurlyeq \fin_2$. Note that Theorem \ref{lost_monoid} implies that the semigroup $T\not\approx \fin_2$, and so, in general, $T$ in Theorem \ref{cantor_bendix} cannot be replaced by $\fin_2$.

Throughout the remainder of this section we use $\A$ to denote  an arbitrary almost disjoint family of cardinality $2^{\aleph_0}$. 


Let $X$ and $Y$ be countably infinite sets and let $f,g:X\to Y$. Then we say that $f$ is \emph{almost injective} if it is injective on a cofinite subset of $X$.  If all but finitely many elements of $X$ are contained in $Y$, then we say that $X$ is \emph{almost contained in} $Y$.  If $f$ and $g$ agree on a cofinite subset of $X$, then we say that $f$ and $g$ are \emph{almost equal}. 

\begin{lem}\label{2choices}
Let $u_0, \ldots, u_{r}\in \nat^\nat$ and let $N$ be an infinite subset of $\nat$  such that $u_{r-1}\cdots u_0$ is almost injective on $N$ and $u_j\cdots u_0(N)$ is almost contained in some $A(j)\in \mathcal{A}$ for all $j\in\{0,\ldots, r-1\}$. 
If  $B(0), \ldots, B(r-1)\in \mathcal{A}$, and $g=u_{r}s_{B(r-1)}u_{r-1}\cdots s_{B(0)}u_0$, then $g|_N$ is almost equal to $u_r\cdots u_0|_N$ or a constant function. 
\end{lem}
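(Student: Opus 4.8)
The plan is to push the set $N$ (minus a finite set) through the composition $g=u_{r}s_{B(r-1)}u_{r-1}\cdots s_{B(0)}u_0$ one factor at a time, maintaining an invariant: after each stage the partial composition, restricted to a fixed cofinite subset of $N$, is either almost equal to the corresponding $u$-product or almost constant. The case split occurs when an $s_{B(k)}$ is applied: either $B(k)=A(k)$, so $s_{B(k)}$ acts as the identity on the (almost all) image points that lie in $A(k)$, and we stay in the ``almost equal'' regime; or $B(k)\neq A(k)$, so by almost disjointness $A(k)\cap B(k)$ is finite, almost every image point is killed to $0$, and from then on everything is almost constant.

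First I would replace $N$ by a convenient cofinite subset. Since $u_{r-1}\cdots u_0$ is almost injective on $N$, fix a cofinite $N_0\subseteq N$ on which it is injective; then for every $j\in\{0,\ldots,r-1\}$ the map $u_j\cdots u_0$ is injective on $N_0$, because $u_j\cdots u_0(m)=u_j\cdots u_0(n)$ forces $u_{r-1}\cdots u_0(m)=u_{r-1}\cdots u_0(n)$, hence $m=n$. For each such $j$ the set $u_j\cdots u_0(N)\setminus A(j)$ is finite, so by injectivity on $N_0$ its preimage in $N_0$ is finite; deleting these finitely many points for all $j$ gives a cofinite $N'\subseteq N$ with $u_j\cdots u_0(n)\in A(j)$ for all $n\in N'$ and all $j\le r-1$, and with each $u_j\cdots u_0$ ($j\le r-1$) injective on $N'$.

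Next I would induct on $k\in\{0,\ldots,r\}$, writing $v_0=u_0$ and $v_k=u_k s_{B(k-1)}v_{k-1}$ (so $v_r=g$), to prove that $v_k|_{N'}$ is almost equal to $u_k\cdots u_0|_{N'}$ or almost constant. The base case is immediate. For the inductive step with $k\le r-1$: if $v_k|_{N'}$ is almost constant, then applying $s_{B(k)}$ and then $u_{k+1}$ keeps it almost constant. If instead $v_k(n)=u_k\cdots u_0(n)\in A(k)$ for cofinitely many $n\in N'$, there are two subcases. If $B(k)=A(k)$, then $s_{B(k)}$ fixes $v_k(n)$ for these $n$, so $v_{k+1}|_{N'}=u_{k+1}\cdots u_0|_{N'}$ on a cofinite set. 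If $B(k)\neq A(k)$, then $A(k)\cap B(k)$ is finite; since $v_k$ agrees on a cofinite subset of $N'$ with the injective map $u_k\cdots u_0$, only finitely many $n\in N'$ satisfy $v_k(n)\in B(k)$, so $s_{B(k)}v_k|_{N'}$ is almost the constant $0$, whence $v_{k+1}|_{N'}$ is almost the constant $u_{k+1}(0)$. Taking $k=r$ and recalling that $N'$ is cofinite in $N$ yields the statement.

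The only step that genuinely uses the hypotheses rather than routine shrinking of cofinite sets is the subcase $B(k)\neq A(k)$: almost disjointness of $\mathcal{A}$ makes $A(k)\cap B(k)$ finite, but to conclude that only \emph{finitely many points of $N$} are mapped into $B(k)$ — rather than merely finitely many image \emph{values} — one needs the map to be injective there, which is exactly why the almost-injectivity hypothesis must be propagated down every initial segment of the composition. I expect this bookkeeping (and making sure a single cofinite $N'$ works uniformly for all $r$ stages) to be the part requiring care; the rest is mechanical.
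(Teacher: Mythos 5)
Your proof is correct and follows essentially the same route as the paper's: track the composition up to the first index $k$ with $B(k)\neq A(k)$, before which $g$ agrees (almost) with $u_k\cdots u_0$ and after which everything collapses through $s_{B(k)}$ to the constant $0$. You are in fact slightly more explicit than the paper about the one delicate point, namely that almost-injectivity of the initial segments $u_j\cdots u_0$ on $N$ is what upgrades ``only finitely many image values lie in $B(k)$'' to ``only finitely many points of $N$ are mapped into $B(k)$''; the paper uses this implicitly without comment.
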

\proof 
If $A(i)= B(i)$ for all $i\in \{0,\ldots, r-1\}$, then  $g|_N$ is almost equal to $u_r\cdots u_0|_N$ since each $s_{B(i)}$ is the identity of $B(i)$. 

If $j\in \{0,\ldots, r-1\}$ is the least value such that $A(j)\not= B(j)$, then $u_js_{B(j-1)}u_{j-1}\cdots s_{B(0)} u_0|_N$ almost equals $u_j\cdots u_0|_N$
 as in the previous case. Since $\mathcal{A}$ is an almost disjoint family and $A(j)\not=B(j)$, it follows that $A(j)\cap B(j)$ is finite. But  $u_j\cdots u_1u_0(N)$ is almost contained in $A(j)$ and so
$$s_{B(j)}u_js_{B(j-1)}u_{j-1}\cdots s_{B(0)} u_0(n)=s_{B(j)}u_j\cdots u_1u_0(n)=0$$ for all but finitely many $n\in N$.
Therefore $g|_N$ is almost equal to a constant function.  \qed\vspace{\baselineskip}

\proofref{lost_monoid}
If $\mathcal{B}$ equals the union of $\A$ with the set of all finite subsets of $\nat$, then $S_{\mathcal{B}}$ is a semigroup equivalent 
to $S_{\A}$. Thus we may assume without loss of generality that $\A$ contains all finite sets and 
$S_{\A}$ is a subsemigroup of $\nat^\nat$.

It is clear that: 
$$\fin_2 \prec \fin_3 \prec \dots \prec \fin.$$ 
So it suffices to show that $\mathfrak{F}_2 \not \preccurlyeq S_{\mathcal{A}}$ and $S_{\mathcal{A}}\not \preccurlyeq \fin$. 
That $S_{\mathcal{A}}\not \preccurlyeq \fin$ follows since $\fin$ forms an ideal in $\nat^\nat$ and 
$|S_{\mathcal{A}} \setminus \fin| = 2^{\aleph_0}$. 

Let $U$ be any countable subset of $\nat^\nat$. We will show that $\fin_2\not\subseteq \genset{S_{\A}, U}$. Assume without loss of generality that $1_{\nat}\in U$. 
Partition $\nat$ into countably many infinite sets $N(u_0, \ldots, u_{m})$ indexed by the finite tuples $(u_0, \ldots, u_{m})\in U^{m+1}$ for all $m\in\nat$.  We shall 
define $f\in \fin_2$ such that 
$$f|_{N(u_0, \ldots, u_{m})}\not=u_ms_{A(m-1)}u_{m-1}\cdots s_{A(0)}u_0|_{N(u_0, \ldots, u_{m})}$$
for any $A(0), \ldots, A(m-1)\in \mathcal{A}$ whereby $f\not\in \genset{S_{\A}, U}$ and $\fin_2\not\preccurlyeq S_{\A}$. 

Let $u_0, \ldots, u_{m}\in U$ be arbitrary and let $N:=N(u_0, \ldots, u_{m})$. Let $r\in \{0,\ldots, m\}$ be the largest value such that $u_{r-1}\cdots u_0$ is almost injective on $N$ and $u_j\cdots u_0(N)$ is almost contained in some element of $\mathcal{A}$ for all $j\in\{0,\ldots, r-1\}$.  Such an $r$ exists since the conditions are vacuously satisfied when $r=0$.   
We will define $f|_{N}$ such that no extension of $f|_N$ to an element of $\nat^{\nat}$  lies in $u_m S_{\mathcal{A}} u_{m-1} \dots S_{\mathcal{A}} u_0$.
 If $g$ is any element of $u_m S_{\mathcal{A}} u_{m-1} \dots S_{\mathcal{A}} u_0$, then, by  applying Lemma \ref{2choices} to the string of factors from $u_r$ to $u_0$ in the expression for $g$, we see that $g|_N$ is almost equal to either:

\begin{enumerate}
\item[\rm (i)]  $(u_{m}s_{A(m-1)}u_{m-1}\cdots s_{A(r+1)}u_{r+1}s_{A(r)})(u_{r}\cdots u_0)$ for some $A(r), \ldots,  A(m-1)\in \mathcal{A}$; or
\item[\rm (ii)]  a constant function.
\end{enumerate}
From the definition of $r$ there are three cases to consider, since one of the following holds:
\begin{enumerate}
\item[\rm (a)]  $u_{r}\cdots u_0$ is not almost injective on $N$;
\item[\rm (b)]  $u_{m}\cdots u_0$ is almost injective on $N$ and $r=m$ ;
\item[\rm (c)]  $u_{r}\cdots u_0$ is almost injective on $N$, $r<m$, and $u_{r}\cdots u_0(N)$ is not almost contained in any set in $ \mathcal{A}$. 
\end{enumerate}

In each of these cases we shall construct $f|_N$ so that $f|_N$ is constant with value $1$ on some infinite coinfinite subset $M$ of $N$ and constant with value $0$ on $N\setminus M$. In any of these cases, if $g\in  u_m S_{\mathcal{A}} u_{m-1}
 \dots S_{\mathcal{A}} u_0$ and (ii) holds, then no matter how $M$ is defined $f|_N\not=g|_N$. Consequently, below we  verify that $f|_N\not=g|_N$ for  all $g\in  u_m S_{\mathcal{A}} u_{m-1}
 \dots S_{\mathcal{A}} u_0$ such that (i) holds. \vspace{\baselineskip}

\noindent{\bf Case (a).} Since $u_{r}\cdots u_0$ is not almost injective on $N$, there exist infinite disjoint sets $M=\set{m_i}{i\in\nat}\subseteq N$ and $\set{n_i}{i\in\nat}\subseteq N$ such that $u_{r}\cdots u_0(m_i)=u_{r}\cdots u_0(n_i)$ for all $i\in\nat$. In this case, we let $f|_N$ be defined by $f(m_i)=1$ and $f(n)=0$ for all $n\in N\setminus M\supseteq \set{n_i\in N}{i\in\nat}$.  If $g\in  u_m S_{\mathcal{A}} u_{m-1}\dots S_{\mathcal{A}} u_0$ and (i) holds, then $g(m_i)=g(n_i)$ for all but finitely many $i\in\nat$. Hence $f|_N\not=g|_N$, as required.
\vspace{\baselineskip}

\noindent{\bf Case (b).} In this case, we let $M$ be any infinite coinfinite subset of $N$ and
 define $f|_N$ so that $f(n)=1$ if $n\in M$ and $f(n)=0$ if $n\in N\setminus M$.  If $g\in  u_m S_{\mathcal{A}} u_{m-1}
 \dots S_{\mathcal{A}} u_0$ and (i) holds, then $g|_N$ almost equals $u_m\cdots u_0$ and so $g|_N$ is almost injective on $N$. But $f|_N$ is not almost injective on $N$ and so $g|_N\not= f|_N$.   \vspace{\baselineskip}

\noindent{\bf Case (c).} Since $u_{r}\cdots u_0(N)$ is not almost contained in any set in $ \mathcal{A}$, either there exists $A\in\mathcal{A}$ such that $u_{r}\cdots u_0(N)\cap A$ and $u_{r}\cdots u_0(N)\setminus A$ are infinite or $u_{r}\cdots u_0(N)\cap B$ is finite for all $B\in\mathcal{A}$. 
In the first case, let $M\subseteq N$ be such that both $u_{r}\cdots u_0(M)$ and $u_{r}\cdots u_0(N\setminus M)$ contain infinitely many points in $A$ and infinitely many points not in $A$. Then we define $f|_N$ so that $f(n)=1$ if $n\in M$ and $f(n)=0$ if $n\in N\setminus M$. If $g\in  u_m S_{\mathcal{A}} u_{m-1}
 \dots S_{\mathcal{A}} u_0$, (i) holds, and $A=A(r)$, then $g|_{(u_{r}\cdots u_0)^{-1}(\nat\setminus A)\cap N}$ is almost equal to a constant function. If $g\in  u_m S_{\mathcal{A}} u_{m-1}
 \dots S_{\mathcal{A}} u_0$, (i) holds, and $A\not=A(r)$, then $g|_{(u_{r}\cdots u_0)^{-1}(A)\cap N}$ is almost equal to a constant function.
In either case, $g|_N\not=f|_N$.

In the second case, i.e.,  $u_{r}\cdots u_0(N)\cap B$ is finite for all $B\in\mathcal{A}$,  we let $M$ be any infinite coinfinite subset of $N$. 
Then we define $f|_N$ so that $f(n)=1$ if $n\in M$ and $f(n)=0$ if $n\in N\setminus M$.  If $g\in  u_m S_{\mathcal{A}} u_{m-1}
 \dots S_{\mathcal{A}} u_0$ and (i) holds, then $g|_N$ is almost equal to a constant function while $f|_N$ maps infinitely many points to both $0$ and $1$. Hence $f|_N\not= g|_N$.
\qed


\section{Anti-chains}
 
  In \cite{Mesyan2007aa} it was proved that  $\preccurlyeq$ contains 
at least two incomparable elements  by constructing a subsemigroup $S$ of $\nat^\nat$ such that $S \not \preccurlyeq \fin_3$ and $\fin_3 \not \preccurlyeq S$. In Section \ref{almost_disjoint_section} we gave an example of a subsemigroup incomparable to all $\fin_n$.
The following theorem shows that there are anti-chains in $\preccurlyeq$ of arbitrary finite length. 

 \begin{thm}\label{finite_preorder} 
For all $i\in\nat$, there exist $i$ distinct closed subsemigroups contained in $\fin$ that are mutually incomparable under $\preccurlyeq$. 
\end{thm}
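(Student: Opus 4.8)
The plan is to build, for each $i$, a family of $i$ closed subsemigroups of $\fin$ by using almost disjoint families of the kind studied in Section \ref{almost_disjoint_section}, but with a twist that lets me distinguish them from one another rather than just from the $\fin_n$. The naive hope — that $i$ pairwise almost disjoint families $\A_1,\ldots,\A_i$ yield $i$ incomparable semigroups $S_{\A_1},\ldots,S_{\A_i}$ — fails, because $S_{\A_j}\preccurlyeq S_{\A_k}$ is easy to arrange (a countable set of "coordinate-relabelling" functions can convert one family into another). So I would instead work on pairwise disjoint copies of $\nat$. Split $\nat$ into $i$ disjoint infinite pieces $\nat = \Omega_1 \sqcup \cdots \sqcup \Omega_i$, fix on each $\Omega_k$ an almost disjoint family $\A_k$ of size $2^{\aleph_0}$ (closed in $2^{\Omega_k}$, so that the associated semigroup is closed, as in the remark after Theorem \ref{lost_monoid}), and consider for each $k$ the semigroup
$$T_k = \genset{\, S_{\A_k} \cup \{\, \text{constant } 0 \,\} \cup F_k \,},$$
where $F_k$ is a carefully chosen set of maps that act on $\Omega_k$ and "collapse" all the other $\Omega_\ell$ ($\ell\ne k$) in a controlled way — enough to force $T_k$ into $\fin$ after some bookkeeping, while making $T_k$ genuinely "live" on $\Omega_k$ only.

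The key steps, in order, would be: (1) Fix the decomposition and the families $\A_k$, and define the $T_k$ precisely, checking each is a closed subsemigroup of $\fin$ — closedness following because each is a finite union of closed sets in the spirit of the proof of Theorem \ref{cantor_bendix}, and containment in $\fin$ because the generators, off a bounded-image core, have finite image. (2) Prove $T_k \not\preccurlyeq T_\ell$ for $k\ne\ell$. This is the heart of the argument and should be a relativised version of the proof of Theorem \ref{lost_monoid}: given a countable $U \subseteq \nat^\nat$, one shows some element of $T_k$ restricted to $\Omega_k$ — specifically something of the form $f$ with $f|_{\Omega_k}$ taking two values on a partition of $\Omega_k$ governed by an $\A_k$-set — cannot be written using $T_\ell$ and $U$, because any word in $T_\ell \cup U$, when examined on the relevant piece of $\Omega_k$, is governed only by the $\A_\ell$-structure (which on $\Omega_k$ is trivial) together with the countably many maps in $U$, and a diagonalisation over $U^{<\nat}$ — partitioning $\Omega_k$ into infinite pieces indexed by tuples from $U$, exactly as in the proof of Theorem \ref{lost_monoid} — defeats all of them. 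The three-case analysis (not almost injective / almost injective with $r$ maximal / image not almost contained in any $\A_k$-set) transfers with only notational changes, because on $\Omega_k$ the family $\A_\ell$ contributes nothing and $S_{\A_\ell}$ behaves like a subset of $\fin$ there. (3) Conclude incomparability: $T_k\preccurlyeq T_\ell$ together with $T_\ell\preccurlyeq T_k$ would contradict step (2), and the $T_k$ are distinct since they differ on which $\Omega_k$ carries the uncountable almost disjoint structure.

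The main obstacle will be step (2), and within it the bookkeeping needed to handle the auxiliary generators $F_\ell$: I need the generating set of $T_\ell$ to be simple enough that a word in $T_\ell \cup U$ over $\nat$ can, on the part of $\Omega_k$ I care about, be normalised into the shape $u_m s_{A(m-1)} u_{m-1} \cdots s_{A(0)} u_0$ to which Lemma \ref{2choices} applies — i.e. the $F_\ell$-factors and the $S_{\A_\ell}$-factors outside $\Omega_\ell$ must be absorbable into the "$u$"-factors (which are allowed to be arbitrary, since $U$ is arbitrary and I may as well throw the finitely many generators of each $T_\ell$ into $U$). Getting the $F_k$ defined so that simultaneously (a) $T_k\subseteq\fin$, (b) the generators outside $\Omega_\ell$ are harmless for the normalisation, and (c) $T_k$ still has the "two-valued witness on $\Omega_k$" that defeats $\fin$-like behaviour, is the delicate balancing act; once the definitions are right, the incomparability proof is a relativisation of Theorem \ref{lost_monoid} and should go through essentially verbatim.
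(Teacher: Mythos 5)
There is a genuine gap, and it occurs at the very first step of your construction. You set $T_k=\genset{S_{\A_k}\cup\{\text{const }0\}\cup F_k}$ and hope that the auxiliary maps $F_k$ can be chosen so as to ``force $T_k$ into $\fin$ after some bookkeeping''. This is impossible: a generated subsemigroup contains its generating set, and for any infinite $A\in\A_k$ the partial identity $s_A$ has image $A\cup\{0\}$, which is infinite, so $s_A\notin\fin_n$ for any $n$ and hence $s_A\notin\fin$. Adding more generators only enlarges $T_k$, so no choice of $F_k$ can place $T_k$ inside $\fin$, and the semigroups you build are disqualified by the statement of the theorem before the incomparability argument even starts. (This is exactly why Theorem \ref{lost_monoid} is phrased as producing a semigroup \emph{incomparable} to $\fin$ and the $\fin_n$, rather than an antichain inside $\fin$.)

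Even if you repair this by replacing the $s_A$ with finite-image surrogates (say the two-valued indicators $f_A$ of Section 6, so that $T_k$ is essentially $F_{\A_k}$ supported on $\Omega_k$), step (2) has a second gap: to prove $T_k\not\preccurlyeq T_\ell$ you must produce a witness lying \emph{inside} $T_k$ that escapes $\genset{T_\ell,U}$ for every countable $U$. The diagonalisation in the proof of Theorem \ref{lost_monoid} (and in Lemma \ref{find_one}) manufactures a two-valued function $f$ from scratch against the given $U$; nothing guarantees that this $f$ belongs to a family $\A_k$ fixed in advance. Making the witnesses land in $T_k$ would require building $\A_1,\dots,\A_i$ simultaneously by a transfinite back-and-forth against all countable $U$, which is a substantially different and harder argument than the ``relativisation with only notational changes'' you describe. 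The paper sidesteps all of this: it takes the semigroups $\U_{k,m}$ of maps fixing $\{0,\dots,k-1\}$ pointwise and sending everything else into $\{k,\dots,k+m-1\}$ (visibly closed and contained in $\fin_{k+m}\subseteq\fin$), proves in Lemma \ref{mnk_theorem} that $\U_{k,m}\preccurlyeq\U_{l,n}$ exactly when $m\le n$ and $k+m\le l+n$, and reads the antichain $\U_{0,i+1},\U_{2,i},\dots,\U_{2i-2,2}$ off this arithmetic; no almost disjoint families are needed.
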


Let $m,k\in\nat$ be such that $m \geq 2$ and define $\U_{k,m}$ to be the semigroup of all $f \in \nat^\nat$ satisfying 
$$f(i) = i\text{ if  } i < k\text{ and }f(i)\in \{k, k+1,\ldots, k + m-1\}\text{ if }i\geq k.$$
It is easy to see that every $\U_{k,m}$ is a closed subsemigroup of $\nat^\nat$. Note that $\U_{0,m} \approx \fin_{m}$. 

\begin{lem}\label{mnk_theorem}
Let $k, l, m, n\in \nat$ be such that  $m,n \geq 2$. Then $\U_{k,m} \preccurlyeq \U_{l,n}$ if and only if $m \leq n$ and $k+m \leq l+n$.
\end{lem}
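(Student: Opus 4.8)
The plan is to prove both implications essentially by hand, the real work being the necessity of $m\le n$.

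\emph{Sufficiency.} Assume $m\le n$ and $k+m\le l+n$. If $k\ge l$ I would simply observe $\U_{k,m}\subseteq\U_{l,n}$: any $f\in\U_{k,m}$ fixes $\{0,\dots,l-1\}\subseteq\{0,\dots,k-1\}$ pointwise, and its image lies in $\{l,\dots,k-1\}\cup\{k,\dots,k+m-1\}=\{l,\dots,k+m-1\}\subseteq\{l,\dots,l+n-1\}$, so $f\in\U_{l,n}$. If $k<l$ I would exhibit two \emph{fixed} maps $\delta,\gamma\in\nat^\nat$ with $\U_{k,m}\subseteq\genset{\U_{l,n},\delta,\gamma}$. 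Take $\delta$ equal to the identity on $\{0,\dots,k-1\}$ and to the shift $i\mapsto i+(l-k)$ on $\{k,k+1,\dots\}$, and take $\gamma$ equal to the identity on $\{0,\dots,k-1\}$, to $j\mapsto j-(l-k)$ on $\{l,\dots,l+m-1\}$, and arbitrary elsewhere (these definitions are consistent since $k-1<l$). Given $f\in\U_{k,m}$, let $v_f$ be the identity on $\{0,\dots,l-1\}$ and $j\mapsto (l-k)+f\bigl(j-(l-k)\bigr)$ on $\{l,l+1,\dots\}$; since $f$ sends $\{k,k+1,\dots\}$ into $\{k,\dots,k+m-1\}$ one checks $v_f$ sends $\{l,l+1,\dots\}$ into $\{l,\dots,l+m-1\}\subseteq\{l,\dots,l+n-1\}$, so $v_f\in\U_{l,n}$, and a direct computation gives $f=\gamma v_f\delta$.

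\emph{Necessity of $k+m\le l+n$.} Every $v\in\U_{l,n}$ satisfies $|v(\nat)|\le l+n$, so $\U_{l,n}\subseteq\fin_{l+n}$, and $\fin_{l+n}$ is an ideal of $\nat^\nat$. Hence if $\U_{k,m}\subseteq\genset{\U_{l,n},C}$ with $C$ countable, then $\genset{\U_{l,n},C}\subseteq\fin_{l+n}\cup\genset{C}$, since a word in the generators either uses a factor from $\U_{l,n}$ (and then lies in the ideal $\fin_{l+n}$) or lies in $\genset{C}$. As $\U_{k,m}$ contains $2^{\aleph_0}$ functions whose image has size exactly $k+m$ while $\genset{C}$ is countable, some such function lies in $\fin_{l+n}$, forcing $k+m\le l+n$.

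\emph{Necessity of $m\le n$.} This is the crux, and I would argue by a diagonalisation in the spirit of the proof of Theorem~\ref{lost_monoid}. Assume $\U_{k,m}\subseteq\genset{\U_{l,n},C}$ with $C$ a countable subsemigroup containing $1_\nat$; then every element of $\genset{\U_{l,n},C}$ has the form $h=u_r v_r u_{r-1}\cdots v_1 u_0$ with $r\ge 0$, $u_j\in C$ and $v_j\in\U_{l,n}$. Partition $\nat\setminus\{0,\dots,k-1\}$ into infinite sets $N(\vec u)$ indexed by the (countably many) tuples $\vec u=(u_0,\dots,u_r)\in C^{r+1}$, and build $f\in\U_{k,m}$ with $f(i)=i$ for $i<k$ such that $f|_{N(\vec u)}\ne h|_{N(\vec u)}$ whenever $h=u_r v_r\cdots v_1 u_0$ for some $v_j\in\U_{l,n}$; this makes $f\notin\genset{\U_{l,n},C}$. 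The key point is that, because every $v_j$ fixes $\{0,\dots,l-1\}$ pointwise, one can split $N=N(\vec u)$ using only the \emph{fixed} maps $u_0,\dots,u_{r-1}$: let $N_{\mathrm{fix}}=\{x\in N:\ u_j\cdots u_0(x)<l\ \text{for all }j<r\}$, and for $j^\ast<r$ let $N^{(j^\ast)}=\{x\in N:\ j^\ast\ \text{is least with}\ u_{j^\ast}\cdots u_0(x)\ge l\}$. On $N_{\mathrm{fix}}$ every $v_j$ acts trivially along the orbit of $x$, so $h|_{N_{\mathrm{fix}}}=(u_r\cdots u_0)|_{N_{\mathrm{fix}}}$ is a single function independent of the $v_j$; on $N^{(j^\ast)}$ the map $v_{j^\ast+1}$ carries the orbit into $\{l,\dots,l+n-1\}$ before any later map is applied, so $h(N^{(j^\ast)})\subseteq\Psi(\{l,\dots,l+n-1\})$ for some $\Psi$, giving $|h(N^{(j^\ast)})|\le n$. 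Since $N$ is the disjoint union of these $r+1$ pieces, one is infinite; on an infinite $N^{(j^\ast)}$ I would let $f$ map \emph{onto} $\{k,\dots,k+m-1\}$, so $|f(N^{(j^\ast)})|=m>n\ge|h(N^{(j^\ast)})|$ and hence $f|_{N^{(j^\ast)}}\ne h|_{N^{(j^\ast)}}$; on an infinite $N_{\mathrm{fix}}$ I would make $f$ differ from the fixed function $(u_r\cdots u_0)|_{N_{\mathrm{fix}}}$ at a single point (possible as $m\ge 2$); on the remaining pieces $f$ is defined into $\{k,\dots,k+m-1\}$ arbitrarily, which keeps $f\in\U_{k,m}$.

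The main obstacle is this last step: one must set up the decomposition of $N(\vec u)$ so that it depends only on $u_0,\dots,u_{r-1}$ and not on the unknown $v_j$ — which is exactly what the fixed-point set of $\U_{l,n}$ permits — and then verify carefully that on each "squeezing" piece $N^{(j^\ast)}$ the image of $h$ is bounded by $n$ for every choice of the $v_j$, including the boundary cases $j^\ast=0$ and $j^\ast=r-1$.
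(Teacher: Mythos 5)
Your proof is correct, and the hard direction (necessity of $m\le n$) goes by a genuinely different route from the paper's. The paper argues globally via preimages: it forms the countable family $\mathcal{K}$ of finite unions of sets $u^{-1}(i)$ with $u\in\genset{U}$, factors any $f\in\genset{\U_{l,n},U}\setminus\genset{U}$ as $f=hgu$ with $u\in\genset{U}$, $g\in\U_{l,n}$, notes that every preimage of $f$ outside $\mathcal{K}$ must contain one of the $n$ sets $(gu)^{-1}(l),\dots,(gu)^{-1}(l+n-1)$ (so there are at most $n$ of them), and then exhibits an element of $\U_{k,m}$ with $m>n$ preimages outside $\mathcal{K}$. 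You instead diagonalise pointwise, partitioning $\nat\setminus\{0,\dots,k-1\}$ into sets $N(\vec u)$ indexed by tuples from $C$ and splitting each $N(\vec u)$ according to when the pure-$C$ orbit first escapes $\{0,\dots,l-1\}$; your key observation --- that this splitting is independent of the unknown $\U_{l,n}$-factors because they fix $\{0,\dots,l-1\}$ pointwise --- is sound, and the image-size comparison $|h(N^{(j^\ast)})|\le n<m=|f(N^{(j^\ast)})|$ does the same job as the paper's preimage count. Your route is closer in spirit to the paper's proof of Theorem \ref{lost_monoid} and requires tracking orbits case by case; the paper's $\mathcal{K}$-argument is slicker because counting preimages lets it collapse the whole word to a single factorisation $hgu$. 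Your sufficiency argument also differs mildly (a case split giving the literal inclusion $\U_{k,m}\subseteq\U_{l,n}$ when $k\ge l$, versus the paper's single uniform pair $g,h$ with $f=hf'g$), but both are routine and your computations check out.
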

\proof 
($\Leftarrow$)
 We define $g,h \in \nat^\nat$ such that $\U_{k,m} 
\leq \genset{\U_{l,n},g,h}$.
Let $g,h \in \nat^\nat$ be  any mappings such that  
\begin{equation*}
g(i) = 
\begin{cases} 
i &\text{if } 0\leq i < k\\
i-(k+m)+(l+n) &\text{if } i\geq k
\end{cases}
\end{equation*}
\begin{equation*}
h(i)=
\begin{cases} 
i &\text{if } 0\leq i < k\\
i+(k+m)-(l+n) &\text{if }i\geq l+n-m.
\end{cases}
\end{equation*}
The mapping $h$ is well-defined since $l + n - m \geq k + m - m = k$.  Also, since $g(i)\geq l+n-m$ if $i\geq k$,   it follows that $hg=1_{\nat}$. 

Let $f \in \U_{k,m}$ be arbitrary and let $f' \in \nat^\nat$ be the map defined by 
\begin{equation*}
f'(i)=
\begin{cases}
i&\text{if }i< l+n-m\\
gfh(i)&\text{if }i\geq l+n-m.
\end{cases}
\end{equation*}
We prove that $f'\in  \U_{l,n}$.  If $i< l+n-m$, then  $f'(i)=i$ and, in particular, since $n\geq m$, $f'(j)=j$ for all $j<l$. 
If $i\geq l+n-m$, then $h(i)=i+(k+m)-(l+n)\geq k$. Hence $k\leq fh(i)\leq k+m-1$ and so $l\leq l+n-m\leq gfh(i)=f'(i)\leq l+n-1$.
Thus $f'\in \U_{l,n}$. 

To conclude, we show that $f=hf'g$. If $i< k$, then $hf'g(i)=hf'(i)=h(i)=i=f(i)$ since $k\leq l+n-m$. If $i\geq k$, then $g(i)\geq l+n-m$ and so $hf'g(i)=hgfhg(i)=f(i)$.
Therefore $f=hf'g$ and so $f\in \genset{\U_{l,n},g,h}$. Thus $\U_{k,m}\subseteq \genset{\U_{l,n},g,h}$ and so  $\U_{k,m} \preccurlyeq \U_{l,n}$.
\vspace{\baselineskip}

\noindent($\Rightarrow$)
We  prove the contrapositive.  If $k+m > l +n$, then  $\U_{k,m} \setminus \fin_{l+n}$ is uncountable. Since $\fin_{l+n}$ is an ideal in $\nat^\nat$, it follows that  $\U_{k,m} \not \preccurlyeq \fin_{l+n}$.
But $\U_{l,n} \subseteq \fin_{l+n}$ and therefore $\U_{k,m} \not \preccurlyeq \U_{l,n}$.

Now, assume that $m>n$.  Let $U$ be an arbitrary countable subset of $\nat^\nat$. We will show that $\U_{k,m}\not\subseteq \genset{\U_{l,n}, U}$. We may assume without loss of generality that $1_{\nat}\in U$. 
Let $\mathcal{K}\subseteq \mathcal{P}(\nat)$ be the set of finite unions of sets in $\set{f^{-1}(i)}{i\in\nat\text{ and } f\in \genset{U}}$ and let $f \in \genset{\U_{l,n}, U}$  be arbitrary. We will show that there are at most $n$ values  $i$ for which $f^{-1}(i)\not\in \mathcal{K}$. If $f \in \genset{U}$, then $f^{-1}(i)\in\mathcal{K}$ for all $i\in\nat$. Otherwise,
$$f=hgu$$ 
for some $u\in \genset{U}$, $g \in \U_{l,n}$, and $h\in \genset{\U_{l,n}, U}$. 
If $r \in \{0,1,\dots, l-1\}$, then 
 $$(gu)^{-1}(r)=u^{-1}(r) \in \mathcal{K}.$$ Hence $gu$ has at most $n$ preimages that are not in $\mathcal{K}$, namely the preimages of the elements $l,\ldots, l+n-1$. Every preimage of $f$ is a union of the preimages of $gu$ and, since $gu$ has finite image, it is a finite union. Hence any preimage of $f$ that is not in $\mathcal{K}$ must contain at least one of 
$(g u)^{-1}(l),\ldots,(g u)^{-1}(l+n-1)$. 
Thus $f$ has at most $n$ preimages that are not in $\mathcal{K}$.

On the other hand, we show that there exists $f\in \U_{k,m}$ with $m>n$ preimages that are not in $\mathcal{K}$. 
Since $\mathcal{K}$ is countable, there exists a partition $A_0,\dots, A_{m-1}$ of $\nat\setminus\{0,1,\ldots, k-1\}$ such that $A_0, \dots, A_{m-1}\not\in \mathcal{K}$. 
If $f$ is the element of $\U_{k,m}$ such that $f^{-1}(k+i)=A_i$ for all  $0\leq i \leq m-1$, then $f$ has the required property. It follows that $f \not \in \genset{\U_{l,n}, U}$ and so 
$\U_{k,m} \not \preccurlyeq \U_{l,n}$.
\qed\vspace{\baselineskip}

\proofref{finite_preorder} 
Let $i\in\nat$ be such that $i\geq 1$. We will show that the $i$ semigroups $\U_{0, i+1}, \U_{2, i}, \ldots,$ $\U_{2i-2,2}$ form an antichain under $\preccurlyeq$. Let $k, l, m, n\in \nat$ be such that $k+m=l+n=i+1$. Then, by Lemma \ref{mnk_theorem},   
$\U_{2k, m}\preccurlyeq  \U_{2l, n}$ if and only if $m \leq n$ and $2k+m \leq 2l+n$ if and only if $m=n$ and $k=l$ if and only if $\U_{2k,m}= \U_{2l, n}$. It follows that the semigroups $\U_{0, i+1}, \U_{2, i}, \ldots, \U_{2i-2,2}$ form an anti-chain in $\preccurlyeq$ of length $i$. 
\qed


\section{An uncountable chain}

A \emph{chain} inside a partial order is just a totally ordered subset. 

\begin{thm}\label{disjoint} 
There exists a chain, having length $\aleph_1$, of
$\approx$-classes containing (not necessarily closed) subsemigroups of $\fin_2$.
\end{thm}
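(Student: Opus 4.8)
The plan is to build an $\aleph_1$-indexed family of subsemigroups $S_\alpha \leq \fin_2$ with $S_\alpha \prec S_\beta$ whenever $\alpha < \beta$. Since $\C \approx \fin_2$ and $\fin_2$ has an ideal structure that makes membership in a generated subsemigroup transparent, a natural approach is to work with semigroups of the form $S_{\mathcal{B}} = \set{s_B}{B \in \mathcal{B}}$ for a suitable family $\mathcal{B}$ of subsets of $\nat$ closed under finite intersections — recall from Section \ref{almost_disjoint_section} that such $S_{\mathcal{B}}$ is a subsemigroup, and each $s_B$ is an idempotent of rank at most $\ldots$ well, not rank $2$ in general, so instead I would use \emph{characteristic-like} functions landing in $\{0,1\}$: for $B \subseteq \nat$ let $t_B \in \fin_2$ be the function with $t_B(n) = 1$ if $n \in B$ and $t_B(n) = 0$ otherwise. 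The key point is that $t_B t_C = t_C$ (the output only depends on the last-applied map's partition into the fibre of $1$), so $\set{t_B}{B \in \mathcal{B}}$ together with the two constants is \emph{already} a semigroup — every product collapses to a single $t_B$ or a constant. This rigidity is what makes the $\preccurlyeq$-comparisons computable.

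**The increasing chain of families.** I would take a $\subseteq^*$-increasing (mod finite, strictly increasing) tower $\langle \mathcal{B}_\alpha : \alpha < \aleph_1 \rangle$ of families of subsets of $\nat$, each $\mathcal{B}_\alpha$ of size $\aleph_0$, such that $\mathcal{B}_\alpha \subsetneq^* \mathcal{B}_\beta$ for $\alpha < \beta$ in a strong sense: for $\alpha < \beta$ there should be a set in $\mathcal{B}_\beta$ that is ``independent over'' a countable amount of data coming from $\mathcal{B}_\alpha$. Concretely, partition $\nat$ into infinitely many infinite blocks and let $\mathcal{B}_\beta$ encode, on the $\beta$-th batch of blocks, a family of subsets so generic that no countable set of functions applied to $S_{\mathcal{B}_\alpha}$ can reconstruct it. Then set $S_\alpha$ to be the semigroup generated by $\set{t_B}{B \in \mathcal{B}_\alpha}$ and the constants $0,1$. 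Because $\{t_B : B\}$ is closed under products and the constants form an ideal, $S_\alpha \leq \fin_2$, and $S_\alpha \preccurlyeq S_\beta$ for $\alpha \le \beta$ is immediate since $\mathcal{B}_\alpha \subseteq \mathcal{B}_\beta$ (up to a countable set absorbed into the $C$).

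**Proving strictness.** The heart is showing $S_\beta \not\preccurlyeq S_\alpha$ for $\alpha < \beta$. Let $U$ be an arbitrary countable subset of $\ntrans$; I must exhibit $t_B \in S_\beta$ with $t_B \notin \genset{S_\alpha, U}$. Any element of $\genset{S_\alpha, U}$ is a product of factors from $U$ and factors $t_C$ with $C \in \mathcal{B}_\alpha$ (plus possibly a terminal constant); as in the proof of Lemma \ref{2choices}, such a product, once a $t_C$-factor appears, depends only on that last $t_C$ and the subsequent $U$-factors — so $g|_N$ for a suitable infinite $N$ is almost equal to $u_k \cdots u_{j+1} t_{C} u_j \cdots u_0$ for some $C \in \mathcal{B}_\alpha$ and $u$'s from $U$, or to a constant. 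Fixing a countable ``menu'' of such expressions (as in the proof of Theorem \ref{lost_monoid}, splitting $\nat$ into blocks indexed by $U$-tuples), I would diagonalise: choose $B \in \mathcal{B}_\beta$ so that on each block the pattern of $B$ differs from every menu expression. The obstacle — and this is the step I expect to be hardest — is arranging the tower $\langle \mathcal{B}_\alpha \rangle$ so that this single diagonal set $B$ can be required to lie in $\mathcal{B}_\beta$ for \emph{all} $\beta > \alpha$ simultaneously while keeping each $\mathcal{B}_\beta$ countable and the tower $\subseteq$-increasing; the standard fix is to reverse the bookkeeping — build $\mathcal{B}_\alpha$ by recursion on $\alpha < \aleph_1$, at stage $\alpha$ adding (for each of the countably many already-constructed $\mathcal{B}_{\gamma}$, $\gamma<\alpha$, and each countable set of ``menu data'' appearing among countably many candidates) one new set of $\nat$ diagonalising against it, which is possible since at each stage only countably many requirements are active. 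Finally, $S_\alpha \approx S_\beta$ would force $S_\beta \preccurlyeq S_\alpha$, contradicting the above; hence the $S_\alpha$ lie in distinct $\approx$-classes and form a chain of order type $\aleph_1$.
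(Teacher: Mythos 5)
There is a genuine gap, and it occurs at two places. First, you take each $\mathcal{B}_\alpha$ to have size $\aleph_0$. Then each $S_\alpha$ is a countable semigroup, so $S_\alpha\approx\{1_\nat\}$ for every $\alpha$ (take $C=S_\alpha$ in the definition of $\preccurlyeq$), and all your semigroups collapse into a single $\approx$-class. Any construction of this kind must use families of cardinality $2^{\aleph_0}$. Second, and more fundamentally, your strictness step asserts that ``at each stage only countably many requirements are active.'' That is false: to prove $S_\beta\not\preccurlyeq S_\alpha$ you must defeat \emph{every} countable $U\subseteq\nat^\nat$, and there are $2^{\aleph_0}$ of these. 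A single diagonal set $B$ (or countably many) added to $\mathcal{B}_\beta$ can only defeat the countably many $U$'s you happened to enumerate; the remaining continuum-many $U$'s are untouched, so you cannot conclude $S_\beta\not\preccurlyeq S_\alpha$. Your own bookkeeping remark does not repair this, because the obstruction is cardinality, not the order of construction.

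The paper resolves exactly this tension, and the resolution is the content you are missing. At stage $\alpha$ it enumerates \emph{all} countable subsets $(X_\lambda)_{\lambda<2^{\aleph_0}}$ of $\nat^\nat$, fixes an almost disjoint family $(A_\lambda)_{\lambda<2^{\aleph_0}}$, and for each $\lambda$ produces a diagonal set $C_\lambda\subseteq A_\lambda$ with $f_{C_\lambda}\notin\genset{F_{\mathcal{B}_\alpha},X_\lambda}$ (Lemma \ref{find_one}); it then adds all $2^{\aleph_0}$ sets $C_\lambda$ at once. Two features make this work: (1) since $C_\lambda\subseteq A_\lambda$, the new sets form an almost disjoint family, so the inductive invariant ``$\mathcal{A}_\alpha$ is a countable union of almost disjoint families'' is preserved, and this invariant is precisely the hypothesis under which the diagonalisation lemma applies at later stages (an unstructured ``generic'' family of size continuum gives you no handle for proving anything lies outside $\genset{F_{\mathcal{A}},X}$); and (2) because a new set is supplied for \emph{every} countable $X$, strictness $F_{\mathcal{B}_\alpha}\prec F_{\mathcal{A}_\alpha}$ genuinely holds. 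Your identification of the generators $t_B$ and the collapsing behaviour of products (though note $t_Bt_C\in\{t_C,t_{\nat\setminus C},\text{const}\}$ rather than always $t_C$) matches the paper's setup, but without the almost-disjointness mechanism and continuum-sized stages the argument does not go through.
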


If $A\subseteq \nat$, then we define $f_A\in\nat^\nat$ by
\begin{equation*}
f_A(i)=
\begin{cases}
1 &\text{if }i \in A\\
0 &\text{if } i\not\in A.
\end{cases}
\end{equation*}
If $\mathcal{A}\subseteq \mathcal{P}(\nat)$ containing $\emptyset$ or $\nat$, then write
$$F_{\mathcal{A}}=\set{f_A\in \nat^\nat}{A\in \mathcal{A}\text{ or }\nat\setminus A\in \mathcal{A}}.$$
It is easy to verify that $F_{\mathcal{A}}$ is a subsemigroup of $\C\leq \mathfrak{F}_2$. 

\begin{lem}\label{find_one}
Let $\mathcal{A}$ be a countable union of almost disjoint families $(\mathcal{A}_i)_{i\in \nat}$ of subsets of $\nat$ where $\A_i$ contains all finite subsets of $\nat$ for all $i\in\nat$, let $A$ be any infinite subset of $\nat$, and let $X$ be any countable subset of $\nat^\nat$. Then there exists $B\subseteq A$ such that  $f_B\not \in \genset{F_{\mathcal{A}}, X}$. 
\end{lem}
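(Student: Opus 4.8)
The plan is to build $B\subseteq A$ by a diagonalization that simultaneously defeats every element of $\genset{F_{\mathcal A},X}$. First I would set up the combinatorial bookkeeping: since $X$ is countable and $\genset{F_{\mathcal A},X}$ is generated by $X$ together with the functions $f_C$ ($C\in\mathcal A$ or $\nat\setminus C\in\mathcal A$), every element $g\in\genset{F_{\mathcal A},X}$ is a word $g = w_r f_{C_{r-1}} w_{r-1}\cdots f_{C_0} w_0$ where each $w_i\in\genset{X}$ and each $C_i$ lies in some $\mathcal A_{j}$ (up to complement). The key point, analogous to Lemma \ref{2choices}, is that on a suitable infinite subset of $A$ the behaviour of such a word is heavily constrained: once one of the factors $f_{C_i}$ is applied, the image lands in $\{0,1\}$, and thereafter the word either follows $w_r\cdots w_0$ up to the first "branching'' factor or collapses to a constant. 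So I only need to control, for each word, finitely much data — and there are only countably many relevant words.

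Second, I would organize the countably many "threats'' to be diagonalized against. Enumerate all pairs $(w, f_C\mapsto\text{value})$ — more precisely, enumerate all triples consisting of a word $w\in\genset{X}$ (there are countably many since $X$ is countable), together with the finite prefix of decisions that could matter. The subtlety forced by the hypothesis that $\mathcal A$ is a \emph{countable union} of almost disjoint families (rather than a single one) is that the "$A(j)\in\mathcal A$'' used in Lemma \ref{2choices} need not be almost disjoint from one another across different $\mathcal A_i$'s; so I expect to build $B$ by a fusion/finite-injury-style recursion where at stage $n$ I have committed $B\cap [0,k_n)$ and an infinite "reservoir'' $R_n\subseteq A$ still free to be split, and I act to kill the $n$-th threat by choosing which elements of a large finite chunk of $R_n$ to put into $B$. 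Concretely, to defeat a given $g$ I want to find infinitely many points where $f_B$ disagrees with $g$; if $g|_N$ is eventually constant on the reservoir this is automatic (choose $B$ to take both values $0$ and $1$ infinitely often inside $R_n$), and if $g|_N$ is almost equal to the "structured'' map $u = w_r\cdots w_0$, I split $R_n$ into a piece $M$ with $u(M)$ and $u(R_n\setminus M)$ each meeting every relevant $A\in\mathcal A_i$-set and its complement infinitely, exactly as in Case (c) of the proof of Theorem \ref{lost_monoid}, so that after the branching factor $f_{C_r}$ the resulting function is forced constant on one of $M$, $R_n\setminus M$ while $f_B$ is $1$ on $M$ and $0$ on $R_n\setminus M$.

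Third — the genuinely new ingredient compared with Theorem \ref{lost_monoid} — I have to handle the case where $u(R_n)$ is \emph{almost contained in} some $A\in\mathcal A_i$ for one of the countably many families, because then the "branching'' argument above does not immediately apply to that family. Here is where I would use that $\mathcal A_i$ is almost disjoint and contains all finite sets: within $u(R_n)\cap A$ I still have an infinite set that is almost disjoint from every \emph{other} member of $\mathcal A_i$, and since there are only finitely many families' worth of constraints that can be "triggered'' before the word terminates (the word has fixed finite length $r$), I can thin the reservoir through finitely many more steps so that $u(R_n')$ is not almost contained in any member of any $\mathcal A_i$ that the word $g$ can reference — and then proceed as before. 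Iterating over all threats and taking $B=\bigcup_n (B\cap[0,k_n))$, we get $f_B\notin\genset{F_{\mathcal A},X}$.

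The main obstacle I anticipate is precisely this interaction between the countably-many-families structure of $\mathcal A$ and the finite but a priori unbounded length of words in $\genset{F_{\mathcal A},X}$: I must make sure that the thinning of the reservoir needed to escape "almost containment'' can be arranged uniformly, so that a single $B$ works against all words at once. I expect this to be handled by a careful fusion argument (at each stage shrinking to an infinite subset, and ensuring the final $B$ is a pseudo-intersection of the reservoirs that still respects all the finitely-many disagreement requirements imposed at earlier stages), together with the observation that any fixed word $g$ only ever invokes finitely many of the families $\mathcal A_i$, so its threat is fully neutralized at some finite stage.
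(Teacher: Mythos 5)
Your proposal imports the machinery of Lemma \ref{2choices} and Theorem \ref{lost_monoid}, but that machinery does not transfer, and relying on it creates a genuine gap. The map $s_C$ acts as the \emph{identity} on $C$, which is why a word over $S_{\A}$ can ``follow $u_r\cdots u_0$'' for a long time; by contrast $f_C$ is \emph{constant} on $C$ and has image contained in $\{0,1\}$. Hence a word $g=w_rf_{C_{r-1}}\cdots f_{C_0}w_0$ is never ``almost equal to $w_r\cdots w_0$'': everything to the left of the rightmost generator $f_{C_0}$ factors through the two-point set $\{0,1\}$, so if $g\in\C$ then $g$ is one of $f_{C_0}w_0$, $f_{\nat\setminus C_0}w_0$, $f_{\emptyset}w_0$, $f_{\nat}w_0$, all of which lie in $F_{\A_j}\genset{X}$ for the $j$ with $C_0$ coming from $\A_j$ (here one uses that $\A_j$ contains the finite sets). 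This collapse, i.e.\ $\C\cap\genset{F_{\A},X}=\C\cap\bigcup_j F_{\A_j}\genset{X}$, is the key idea missing from your write-up: it reduces the threats to the countably many sets $F_{\A_j}x_i$ with $x_i\in\genset{X}$, each of which is defeated on its own block $U_{i,j}$ of a partition of $A$ into infinite sets, so no fusion, reservoirs, or finite-injury bookkeeping is needed. Without this collapse your list of requirements is not even countable: a word references arbitrary members of $\A$, of which there are $2^{\aleph_0}$, and you never pin down how the ``finite prefix of decisions'' is compressed into a countable list. (The compression is exactly that, within a fixed $F_{\A_j}x_i$, almost disjointness of $\A_j$ lets one choice of $V_{i,j}\subseteq U_{i,j}$ kill all continuum many $f_Dx_i$ at once.)

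There is also a concretely false step in your third paragraph: you claim that by thinning the reservoir finitely often you can arrange that $u(R_n')$ is not almost contained in any member of the relevant families. If $u(R_n)$ is almost contained in $A$, then so is $u(R')$ for every infinite $R'\subseteq R_n$; almost containment cannot be escaped by passing to subsets. Fortunately it need not be escaped, because you have the hard and easy cases interchanged: if $x_i(U_{i,j})$ is almost contained in some $A\in\A_j$ (or, more generally, once one fixes a $C\in\A_j$ meeting $x_i(U_{i,j})$ infinitely and restricts to $x_i^{-1}(C)\cap U_{i,j}$, or observes that every member of $\A_j$ meets $x_i(U_{i,j})$ finitely), almost disjointness forces every $f_Dx_i$ with $f_D\in F_{\A_j}$ to be almost constant on the relevant infinite subset of $U_{i,j}$, and an infinite coinfinite choice of $V_{i,j}$ inside that subset already disagrees with every almost constant function. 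The only genuinely distinct remaining case is non-injectivity of $x_i$ on $U_{i,j}$, handled by a single pair of points with equal images.
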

\proof 
Note that if $f_C\in F_{\A_i}$, $g\in \nat^{\nat}$, and $gf_{C}\in \C$, then $gf_C\in \{f_C, f_{\nat\setminus C}, f_{\nat}, f_{\emptyset}\}\subseteq F_{\A_i}$.
In particular, $F_{\mathcal{A}}=\bigcup_{i\in\nat} F_{\mathcal{A}_i}$ and $\C\cap\genset{F_{\A_i}, X}=\C\cap F_{\A_i}\genset{X}$. Hence
$$\C\cap \genset{F_{\mathcal{A}}, X}=\C\cap \genset{\bigcup_{i\in\nat} F_{\mathcal{A}_i}, X}=\C\cap \bigcup_{i\in\nat} F_{\mathcal{A}_i}\genset{X}$$
and so it suffices to find $f\in\C$ such that  $B:=f^{-1}(1)\subseteq A$ and $f\not\in F_{\mathcal{A}_i}\genset{X}$ for all $i\in\nat$.
Let $(U_{i,j})_{i,j\in\nat}$ be any infinite sets partitioning $A$ and let $\genset{X}=\{x_0, x_1,  \ldots\}$. We shall specify a subset $V_{i,j}$ of $U_{i,j}$ for all $i,j\in\nat$ such that if  $f\in \nat^\nat$ is any mapping such that 
\begin{equation*}
f(n)=
\begin{cases}
1 &\text{if }n \in V_{i,j}\\
0 &\text{if } n\in U_{i,j}\setminus V_{i,j},
\end{cases}
\end{equation*}
then $f\not\in  F_{\mathcal{A}_j}x_i$. 

If $x_i$ restricted to $U_{i,j}$ is not injective, then there exist
distinct $k,l \in U_{i,j}$ with $x_i(k)=x_i(l)$.
Thus if $g \in F_{\A_j}$, then $gx_i(k)=gx_i(l)$. In this case, we let $V_{i,j}$ be any subset of $U_{i,j}$ such that $k\in V_{i,j}$ and $l\not\in V_{i,j}.$

If $x_i$ is injective on $U_{i,j}$ and there exists $C\in \mathcal{A}_j$  such that $x_i(U_{i,j})\cap C$ is infinite, then  we define
$V_{i,j}$ to be any infinite coinfinite subset  of $U_{i,j}\cap x_i^{-1}(C)$. 
In this case, if $g\in  F_{\mathcal{A}_j}$, then $g x_i$ restricted to $U_{i,j}\cap x_i^{-1}(C)$ is almost equal to the constant function with value $0$ or $1$.  Hence $f\not\in  F_{\mathcal{A}_j}x_i$, as required.

If $x_i$ is injective on $U_{i,j}$ and $x_i(U_{i,j}) \cap C$ is finite for all $C\in\mathcal{A}_j$, then we define $V_{i,j}$ to be any infinite coinfinite subset of $U_{i,j}$. In this case,  as above,  if $g\in F_{\mathcal{A}_j}$, then $gx_i$ restricted to $U_{i,j}$ is almost equal to the constant function with value $0$ or $1$, and so $f\not\in  F_{\mathcal{A}_j}x_i$.

We complete the definition of $f$ by setting $f(n)=0$ for all $n\in\nat\setminus A$. 
From our construction, $f^{-1}(1)\subseteq A$ and $f\not\in  F_{\mathcal{A}_j}x_i$ for all $i,j\in\nat$, as required. 
\qed\vspace{\baselineskip}

\proofref{disjoint}
Let $\mathcal{A}_0$ be any almost disjoint family of cardinality $2^{\aleph_0}$ containing all the finite subsets of $\nat$.
Then for all countable $X\subseteq \nat^\nat$, by Lemma \ref{find_one}, there exists $f\in \C$ such that $f\not\in \genset{F_{\mathcal{A}_0}, X}$. In particular, $F_{\mathcal{A}_0}\prec \C\approx \fin_2$. 

We define by transfinite recursion a chain $(F_{\mathcal{A}_{\alpha}})_{\alpha<\aleph_1}$ such that $\mathcal{A}_{\alpha}$ is a countable union of almost disjoint families and $F_{\mathcal{A}_\alpha}\prec F_{\mathcal{A}_{\beta}}\prec \C$  for all ordinals $\alpha<\beta<\aleph_1$.

Assume that $\alpha<\aleph_1$ and that we have defined countable unions $\mathcal{A}_{\beta}$ of almost disjoint families for all $\beta<\alpha$.  Let $\mathcal{B}_{\alpha}=\bigcup_{\beta<\alpha} \mathcal{A}_{\beta}$, let $\mathcal{A}=(A_{\lambda})_{\lambda<2^{\aleph_0}}$ be an almost disjoint family of subsets of $\nat$, and let 
$(X_\lambda)_{\lambda<2^{\aleph_0}}$ be the countable subsets of $\nat^\nat$. 
Since every $\mathcal{A}_\beta$, $\beta<\alpha$, is a countable union of almost
 disjoint families and $\alpha$ is a countable ordinal, it follows that $\mathcal{B}_\alpha$ is a
countable union of almost disjoint families.
By Lemma \ref{find_one}, for all $\lambda<2^{\aleph_0}$ there exists $C_{\lambda}\subseteq A_{\lambda}$  such that $f_{C_\lambda}\not\in \genset{F_{\mathcal{B}_{\alpha}}, X_{\lambda}}$.  Let $\mathcal{A}_{\alpha}=\mathcal{B}_{\alpha}\cup \set{C_{\lambda}}{\lambda<2^{\aleph_0}}$.  Then 
$\set{C_{\lambda}}{\lambda<2^{\aleph_0}}$ is an almost disjoint family, since  if $\lambda\not=\lambda'$, then
 $C_{\lambda}\cap C_{\lambda'}\subseteq A_{\lambda}\cap A_{\lambda'}$ and the
 latter is finite since $\mathcal{A}$ is an almost disjoint family. Hence
$\mathcal{A}_{\alpha}$ is a countable  union of almost disjoint families.  In particular, by Lemma \ref{find_one}, $F_{\mathcal{A}_{\alpha}}\prec \C$. 
By construction, $F_{\mathcal{B}_{\alpha}}\leq F_{\mathcal{A}_{\alpha}}\not\preccurlyeq F_{\mathcal{B}_{\alpha}}$ and so $F_{\mathcal{B}_{\alpha}}\prec F_{\mathcal{A}_{\alpha}}$.  
It follows that $F_{\mathcal{A}_{\beta}}\leq F_{\mathcal{B}_{\alpha}}\prec F_{\mathcal{A}_{\alpha}}$ for all $\beta<\alpha$. \qed


\section*{Acknowledgements}

The work of M. ~Morayne has been partially financed by NCN means granted by decision DEC-2011/01/B/ST1/01439.  J. D. 
Mitchell and Y. P\'eresse would like to thank the University of Colorado at Colorado Springs for their hospitality during the writing of 
this article. We are grateful to the referee for comments and suggestions that have led to significant improvements in the paper.



\end{document}